\newcommand{\abs}[1]{{\mathopen\mid}{#1}{\mathclose\mid}}
\newtheorem{theorem}{Theorem}[section]
\newtheorem{lemma}[theorem]{Lemma}
\newtheorem{remark}[theorem]{Remark}
\newtheorem{conjecture}[theorem]{Conjecture}
\newtheorem{scholium}[theorem]{Scholium}
\DeclareMathOperator\PGL{PGL}
\DeclareMathOperator\LL{L}
\DeclareMathOperator\SL{SL}
\DeclareMathOperator\PSL{PSL}
\DeclareMathOperator\U{U}
\DeclareMathOperator\PU{PU}
\newcommand{\N}{{\mathbb N}}
\newcommand{\C}{{\mathbb C}}
\newcommand{\Z}{{\mathbb Z}}
\begin{document}

\title{The Allcock Ball Quotient}

\author
{Gert Heckman\\
Radboud University Nijmegen}

\date{To Eduard Looijenga for his 65th birthday}

\maketitle

\begin{abstract}
In this article we provide further evidence for the monstrous proposal of Daniel Allcock,
by giving a plausible but still conjectural explanation for the deflation relation 
in the Coxeter group quotient of the orbifold fundamental group.
\end{abstract}

\section{Introduction}

A simply laced Coxeter diagram is just a graph for which any two distinct nodes
are either disconnected or connected by a single bond. All Coxeter diagrams in this
paper are simply laced, and therefore we shall simply write Coxeter diagram for the
longer phrase simply laced Coxeter diagram. Standard examples are the Coxeter diagrams
of type $A_n$ with $n$ nodes labeled $1,\cdots,n$ and only successive nodes are connected,
or the Coxeter diagram of type $\tilde{A}_n$ with $(n+1)$ nodes labeled $0,1,\cdots,n$ with
successive vertices connected together with a connection from $0$ to $n$.

Deleting some nodes from a Coxeter diagram together with all bonds connected with at least one of
them gives a Coxeter subdiagram. For example the Coxeter diagram of type $\tilde{A}_n$ has
the Coxeter diagram of type $A_n$ as subdiagram by deleting the node with label $0$ and
the two bonds connected to this node. A Coxeter subdiagram of type $\tilde{A}_{m}$ in a
bigger Coxeter diagram $X_n$ is called a free $(m+1)$-gon in $X_n$.
For $p,q,r\in\N$ the Coxeter tree diagram of type $Y_{pqr}$ has $n=(p+q+r)+1$ vertices
labeled $0,1,\cdots,(p+q+r)$, with a unique triple node $0$ connected to the first
nodes of three Coxeter diagrams of types $A_p,A_q,A_r$. Of special interest are
the Coxeter diagrams of the finite type $A_n=Y_{(n-1)00}$, $D_n=Y_{(n-3)11}$ for
$n\geq4$ and $E_n=Y_{(n-4)21}$ for $n=6,7,8$ for which
\[ 1/(p+1)+1/(q+1)+1/(r+1)>1\;, \]
and of the affine type $\tilde{A}_n$, $\tilde{D}_n$ and $\tilde{E}_6=Y_{222}$,
$\tilde{E}_7=Y_{331}$, $\tilde{E}_8=Y_{521}$ for which
\[ 1/(p+1)+1/(q+1)+1/(r+1)=1\;. \]
The Coxeter tree diagram of type $\tilde{D}_n$ has a unique quadruple node for $n=4$
and two triple nodes for $n\geq5$ whose deletion gives a disjoint union of $4$ Coxeter
diagrams of type $A_1$ and one of type $A_{n-5}$.

If $X_n$ is some Coxeter diagram with $n$ vertices the Artin group $A(X_n)$ is by definition
a groups with generators $T_i$ for each node $i$ of $X_n$ and relations
\[ T_iT_j=T_jT_i\;,\;T_iT_jT_i=T_jT_iT_j \]
if either $i$ and $j$ are disconnected or connected respectively.
In the former case $T_i$ and $T_j$ commute and in the latter case they braid.
The quotient group of $A(X_n)$ by the quadratic relations
\[ T_i^2=1 \]
is called the Coxeter group $W(X_n)$ of type $X_n$. For a connected Coxeter
diagram $X_n$ the group $W(X_n)$ is finite precisely for the finite type diagrams.
For the affine type diagrams $\tilde{X}_n$ (with $X=A,D,E$) the Coxeter group $W(\tilde{X}_n)$
has a free Abelian subgroup of rank $n$ with finite quotient $W(X_n)$.
In this case the quotient map
\[ W(\tilde{X}_n)\rightarrow W(X_n) \]
is called deflation, and for $X_n=A_n$ the diagram $\tilde{X}_n=\tilde{A}_n$ is also called
a free $(n+1)$-gon and one also speaks of deflation of the free $(n+1)$-gon.

If the connected Coxeter diagram of some type $X_n$ is neither of finite nor of affine type
then the Coxeter group $W(X_n)$ is of exponential growth. However for some special Coxeter
diagrams the group $W(X_n)$ has a remarkable finite quotient with a fairly simple presentation.

Label the generators of the Artin group $A(\tilde{E}_6)$ by
$a,b_1,b_2,b_3,c_1,c_2,c_3$ with $a$ the generator corresponding
to the triple node, with $c_1,c_2,c_3$ the three generators corresponding to the three
extremal nodes and $b_i$ the generator that braids with $a$ and $c_i$ for $i=1,2,3$.
The element
\[ s=ab_1c_1ab_2c_2ab_3c_3 \]
is called the spider element.
The next remarkable result is due to Ivanov and Norton \cite{Ivanov 1992},\cite{Norton 1992}.

\begin{theorem}[Ivanov and Norton]\label{555 Ivanov--Norton theorem}
The group $W(Y_{555})$ modulo the spider relation $s^{10}=1$ is equal to the wreath product
$M\wr2=(M\times M)\rtimes S_2$ of the Fisscher--Griess monster simple group $M$ with the two elements group.
\end{theorem}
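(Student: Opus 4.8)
The plan is to prove the theorem in two stages. Writing $\bar W := W(Y_{555})/\langle\langle s^{10}\rangle\rangle$ for the group presented by the sixteen Coxeter generators together with the single spider relation, and $G := M\wr 2$ for the Bimonster, I would first construct a surjection $\phi:\bar W \twoheadrightarrow G$, and then prove that $\phi$ is injective by bounding $|\bar W|$ from above by $|G| = 2|M|^2$. The first stage is explicit and essentially combinatorial; the second is the genuine difficulty and requires geometric input about the Monster that the soft Coxeter/Artin formalism cannot supply.

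To build $\phi$, I would exhibit sixteen involutions of $G$, one for each node of $Y_{555}$, all lying in a single conjugacy class of transposition-like involutions (those playing the role of Fischer $2A$-involutions in the Bimonster). The three things to check are then: (i) that two generators attached to adjacent nodes have product of order $3$ while two attached to non-adjacent nodes commute, so that the braid and commutation relations of $Y_{555}$ hold, the quadratic relations $T_i^2=1$ being automatic; (ii) that the sixteen transpositions generate $G$; and (iii) that the spider word $s=ab_1c_1ab_2c_2ab_3c_3$, formed from the seven generators of the $\tilde E_6=Y_{222}$ subdiagram, satisfies $s^{10}=1$ in $G$. I expect the cleanest bookkeeping for (i)--(iii) to come from Conway's labelling of the generators by sixteen of the twenty-six points and lines of the projective plane $PG(2,3)$, where incidence dictates exactly which pairs braid; this makes the Coxeter relations essentially automatic and localises the verification of $s^{10}=1$ to a finite computation inside a recognisable subgroup.

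The hard direction is the inequality $|\bar W|\le 2|M|^2$, and here the order is far too large for any direct coset enumeration, so I would use the amalgam method. The idea is to let $\bar W$ act on a geometry $\Gamma$ whose elements are the cosets of the standard parabolic subgroups generated by distinguished subsets of the sixteen generators, arranged so that $\bar W$ is generated by the stabilisers of the members of one maximal flag. One then identifies each of these parabolics, and each proper residue of $\Gamma$, with a known object --- a generalised polygon or a residually recognisable $2$-local geometry of Monster type --- so that the amalgam $\mathcal A$ of maximal parabolics is pinned down up to isomorphism. Since the same amalgam is visible inside $G$ through $\phi$, the group $G$ is a completion of $\mathcal A$, and $\bar W$ is another; the remaining task is to show that these two completions coincide.

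The main obstacle, as always with such identifications, is the final comparison step: proving that $\Gamma$ is simply connected, equivalently that $G$ is the universal completion of its parabolic amalgam. Simple connectivity forces the universal completion of $\mathcal A$ to be $G$; since $\bar W$ is also a completion it is then a quotient of $G$, while $\phi$ exhibits it as mapping onto $G$, and the two together give $\bar W\cong G$. I expect the bulk of the effort to sit in verifying flag-transitivity and in the combinatorial topology needed to contract every cycle of $\Gamma$ to a product of residual cycles; this is precisely where the structural theory of the Monster and its $2$-local geometry becomes indispensable, and it is also the point at which the role of the spider relation $s^{10}=1$ --- the deflation relation that the present paper seeks to explain --- enters decisively, since without it $\Gamma$ fails to close up and $\bar W$ is infinite.
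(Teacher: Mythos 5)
You should first be aware that the paper contains no proof of this statement: Theorem~\ref{555 Ivanov--Norton theorem} is quoted as a known result with citations to Ivanov and Norton \cite{Ivanov 1992}, \cite{Norton 1992}, so the only meaningful comparison is against those works. Measured against them, your outline is strategically faithful: the actual proof does split into (a) Norton's construction of a homomorphism onto $M\wr 2$ from transposition-like ($2A$-type) involutions, with the Coxeter relations read off from incidence in $\mathbb{P}^2(3)$ and a finite verification that the spider has order dividing $10$, and (b) an identification argument in which Ivanov's simple-connectivity theorem for the Monster's $2$-local (tilde) geometry pins down the universal completion of the parabolic amalgam. Your closing syllogism (universal completion equals $G$, hence $\bar{W}$ is a quotient of $G$, and the surjection $\phi$ then forces $\bar{W}\cong G$ since $G$ is finite) is also correct.

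The genuine gap is that neither stage is actually executed, and the deferred steps are not routine. In stage one you never exhibit the sixteen involutions nor perform checks (i)--(iii); the verification $s^{10}=1$ inside $M\wr 2$ is a serious computation requiring detailed knowledge of centralizer structure in $M$, not a formality. More seriously, stage two presupposes that the ``standard parabolic'' subgroups of $\bar{W}$ --- the subgroups generated by proper subsets of the sixteen generators --- are the expected finite groups, so that $\bar{W}$ is a completion of the \emph{same} amalgam visible in $G$. A priori these subgroups of an abstractly presented group could be infinite or could collapse; establishing that they are correct is essentially equivalent to the hierarchy of earlier results (the Conway--Norton--Soicher coset enumerations for $Y_{333}$, $Y_{443}$, $Y_{553}$, of which the $Y_{333}$ case appears in this paper as the $\mathrm{O}_8^-(2){:}2$ theorem), and you cannot simply cite ``the amalgam is pinned down up to isomorphism'' as if it were a bookkeeping step. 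Finally, the simple connectivity of the geometry --- which you correctly identify as the crux --- is the content of Ivanov's paper and is a deep theorem; naming the amalgam method does not discharge it. So your text is a correct research plan reconstructing the known route, but it is not a proof: every step where the Monster's structure actually enters is announced rather than established.
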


Conway and Simons showed that by increasing the number of generators this presentation
takes a simpler form \cite{Conway--Simons 2001}. Let $I_{26}$ be the incidence graph of
the projective plane $\mathbb{P}^2(3)$ over a field of $3$ elements. The nodes are the
points and the lines of the projective plane, and two nodes are connected if they are incident.
The Coxeter diagram $Y_{555}$ is a maximal subtree of $I_{26}$.

\begin{theorem}[Conway and Simons]\label{555 Conway--Simons theorem}
The bimonster $M\wr2$ is obtained from the Coxeter group $W(I_{26})$ by deflating all
free $12$-gons in $I_{26}$.
\end{theorem}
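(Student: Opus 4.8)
The plan is to deduce the theorem from the spider presentation of Theorem~\ref{555 Ivanov--Norton theorem} by setting up an explicit dictionary between the two sets of generators. Write $G$ for the group $W(I_{26})$ with all free $12$-gons deflated, and recall that $M\wr 2 = W(Y_{555})$ modulo the spider relation $s^{10}=1$. Fix the embedding of $Y_{555}$ into $I_{26}$ as the maximal induced subtree, so that $16$ of the $26$ nodes are tree nodes and the remaining $10$ are ``extra'' points or lines of $\mathbb{P}^2(3)$. Each free $12$-gon is an induced $12$-cycle of $I_{26}$, alternating six points and six lines in a hexagon carrying no further incidences; deflating it imposes $W(\tilde{A}_{11})\twoheadrightarrow W(A_{11})=S_{12}$. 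Throughout I would use the automorphism group $\LL_3(3){:}2$ of $\mathbb{P}^2(3)$, of order $11232$, which acts on $I_{26}$ permuting the nodes and the free $12$-gons, so that every relation need only be checked on one representative of each orbit. The goal is to build mutually inverse isomorphisms between $M\wr 2$ and $G$.

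First I would construct a homomorphism $\psi\colon M\wr 2\to G$ sending the $16$ generators of $Y_{555}$ to the corresponding tree nodes of $I_{26}$. The $Y_{555}$ Coxeter relations are automatic since $Y_{555}$ is an induced subdiagram, so the only thing to verify is that the spider relation $s^{10}=1$ already holds in $G$, i.e.\ that it is a formal consequence of the $I_{26}$ Coxeter relations together with the $12$-gon deflations. This is the point where the projective geometry enters: the three arms of the spider, read through the incidence structure, trace closed paths built from free $12$-gons, and deflating those $12$-gons should collapse the would-be affine translations in precisely the pattern that forces $s$ to have order dividing $10$. To see that $\psi$ is surjective I would show that each extra node lies in the subgroup generated by the tree: realising an extra node as the affine node of a free $12$-gon whose other eleven nodes are already expressed in terms of the tree, the deflation relation rewrites it as the palindromic word $x_1x_2\cdots x_{11}\cdots x_2x_1$ in those generators; iterating eliminates all ten extra nodes.

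Conversely I would construct $\phi\colon G\to M\wr 2$ by sending each tree node to its Ivanov--Norton generator and each extra node to the tree-word $w_k$ found above. By Theorem~\ref{555 Ivanov--Norton theorem} a relation holds in $M\wr 2$ exactly when it follows from the $Y_{555}$ Coxeter relations and $s^{10}=1$, so checking that $\phi$ is well defined is again combinatorial group theory inside the spider presentation rather than a computation inside the monster: one must verify that, with the extra nodes set equal to their words, every $I_{26}$ Coxeter relation (commuting or braiding according to incidence) and every $12$-gon deflation becomes a consequence of the spider relation, and the symmetry group $\LL_3(3){:}2$ cuts this down to a few orbit representatives. Finally $\phi$ and $\psi$ are mutually inverse: the composite $G\to M\wr 2\to G$ fixes every tree node by definition and every extra node by the identity $x_k=w_k$ established in the surjectivity step, while $M\wr 2\to G\to M\wr 2$ fixes the sixteen tree generators, which already generate. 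Hence $G\cong M\wr 2$.

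The step I expect to be the main obstacle is the equivalence between the spider relation and the family of $12$-gon deflations, because it must be carried out inside the \emph{infinite} Coxeter group $W(I_{26})$, where words enjoy no obvious normal form. The delicate feature is numerical: the exponent $10$ in $s^{10}=1$ has to emerge exactly from the combinatorics of the $12$-gons of $\mathbb{P}^2(3)$ — an exponent that came out too small would collapse $M\wr 2$ further, while one too large would fail to reach a finite group. A secondary difficulty is the bookkeeping of the many $12$-gons and extra nodes; here I would rely on the $\LL_3(3){:}2$ symmetry to reduce to orbit representatives and, where a residual finite check remains, on a symbolic verification of the finitely many word identities on those representatives.
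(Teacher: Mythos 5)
You should note first that the paper you were given contains no proof of this statement at all: it is imported verbatim from Conway and Simons \cite{Conway--Simons 2001}, so the only meaningful comparison is with that published proof. Your overall frame is the right one, and is indeed the frame used in the literature: fix $Y_{555}$ as the maximal subtree of $I_{26}$, use Theorem~\ref{555 Ivanov--Norton theorem} as the definition-by-presentation of $M\wr2$, and build a dictionary both ways. The routine parts of your sketch are correct: the $Y_{555}$ Coxeter relations hold automatically because $Y_{555}$ is an induced subdiagram; deflating a free $12$-gon is equivalent to identifying its ``affine'' node with the palindromic word $x_1\cdots x_{10}x_{11}x_{10}\cdots x_1$ in the other eleven nodes (the normal closure of that single relator is the whole translation lattice of $W(\tilde{A}_{11})$); and since each of the ten extra nodes of $I_{26}$ lies on a free $12$-gon whose remaining eleven nodes are tree nodes (e.g.\ $a_3$ on $ab_1c_1d_1e_1f_1a_3f_2e_2d_2c_2b_2$, and the $\LL_3(3).2$-orbit structure handles the rest), surjectivity of your $\psi$ follows.

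The genuine gap is that the two steps carrying the entire mathematical content of the theorem are left as declarations of intent. (a) Deriving $s^{10}=1$ from the $I_{26}$ Coxeter relations plus the $12$-gon deflations: your sentence that deflation ``should collapse the would-be affine translations in precisely the pattern that forces $s$ to have order dividing $10$'' is not an argument, and there is no canonical reason the exponent $10$ falls out of $12$-gon combinatorics; this derivation is exactly the new computation of \cite{Conway--Simons 2001} and occupies the core of that paper. (b) Well-definedness of your map $\phi:G\to M\wr2$: you propose to verify, purely formally inside the spider presentation, that every incidence relation and every deflation relation becomes a consequence of $s^{10}=1$. Theorem~\ref{555 Ivanov--Norton theorem} guarantees such derivations exist in principle, but exhibiting them is the word problem in a presentation of a group of order roughly $8\times10^{107}$; neither symmetry reduction by $\LL_3(3).2$ (which only cuts the number of identities to check, not their difficulty) nor ``symbolic verification'' (coset enumeration is hopeless at this scale) makes this feasible. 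The published proof does not proceed this way: it verifies the needed relations \emph{inside} the bimonster, using the prior identification by Conway and Pritchard \cite{Conway--Pritchard 1992} of all $26$ nodes as explicit elements of $M\wr2$ and structural facts about the monster, rather than by formal deduction from the spider presentation. As it stands, your proposal is a correct plan whose two load-bearing steps are precisely the theorem being proved.
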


We shall denote $\omega=(-1+\sqrt{-3})/2$ and $\theta=\omega-\overline{\omega}=\sqrt{-3}$.
Let $\mathcal{E}=\mathbb{Z}+\mathbb{Z}\omega$ be the ring of Eisenstein integers.
An Eisenstein lattice $L$ is a free $\mathcal{E}$-module of finite rank with a Hermitian form
$\langle \cdot,\cdot\rangle$ on $L$ with $\langle\lambda,\mu\rangle\in\theta\mathcal{E}$.
A vector $\varepsilon\in L$ with norm $\langle\varepsilon,\varepsilon\rangle=3$ is called a root in $L$.
The triflection
\[ t_{\varepsilon}(\lambda)=\lambda+(\omega-1)\frac{\langle\lambda,\varepsilon\rangle}{\langle\varepsilon,\varepsilon\rangle}\varepsilon \]
with root $\varepsilon$ is an order three complex reflection leaving $L$ invariant.

A bipartite Coxeter diagram of some type $X_n$ has the additional property that
the $n$ nodes are coloured black or white, such that bonds only connect black and white nodes.
For a Coxeter tree diagram such a bipartition is always possible, and for the incidence
diagram $I_{26}$ one just colours points black and lines white. With a bipartite Coxeter diagram
$X_n$ we associate an Eisenstein lattice $L(X_n)$ with basis $\varepsilon_i$ indexed by the nodes.
The Hermitian form is defined by
\[ \langle\varepsilon_i,\varepsilon_i\rangle=3\;,\;\langle\varepsilon_i,\varepsilon_j\rangle=0\;,\;\langle\varepsilon_p,\varepsilon_l\rangle=\theta \]
for all $i$, for all disconnected $i\neq j$ and for all connected black $p$ and white $l$.
We denote by $\U(L(X_n))$ the group of unitary automorphisms of the Eisenstein lattice $L(X_n)$.
It is easily checked that the map
\[ A(X_n)\rightarrow\U(L(X_n))\;,\;T_i\mapsto t_{\varepsilon_i} \]
extends to a Hermitian representation of the Artin group $A(X_n)$ on the Eisenstein lattice $L(X_n)$.
In fact, for a Coxeter tree diagram this is just the reflection representation
of the Hecke algebra of type $X_n$ with parameter $q=-\omega$ as constructed
by Curtis, Iwahori and Kilmoyer \cite{Curtis--Iwahori--Kilmoyer 1975}.

The Hermitian form for the Eisenstein lattice $L(Y_{555})$ has a two dimensional kernel,
and the quotient lattice is the Lorentzian Eisenstein lattice $L(Y_{544})$ of rank $14$.
We shall abbreviate $L=L(Y_{544})$ and call it the Allcock lattice.
The triflection representation gives a natural homomorphism
\[ A(Y_{555})\rightarrow\U(L) \]
which Allcock \cite{Allcock 2000} and Basak \cite{Basak 2006} have shown to be surjective.
The Allcock lattice $L$ defines in a standard way a complex hyperbolic ball
\[ \mathbb{B}=\mathbb{B}(L)=\mathbb{P}(\{z\in\C\otimes L;\langle z,z\rangle<0\}) \]
with a proper discontinuous action of $\Gamma=\PU(L)$. The ball quotient
\[ \mathbb{B}/\Gamma=\mathbb{B}/\Gamma(L) \]
is the remarkable Allcock ball quotient. For a root $\varepsilon\in L$ the hyperball
\[ \mathbb{P}(\{z\in\C\otimes L;\langle z,z\rangle<0,\langle z,\varepsilon\rangle=0\}) \]
is called the mirror for the root $\varepsilon$, and we write $\mathbb{B}^{\circ}$ for the
complement in $\mathbb{B}$ of all the mirrors. Basak has shown that all mirrors are
conjugate under $\Gamma$, and so
\[ \mathbb{B}^{\circ}/\Gamma=\mathbb{B}^{\circ}/\Gamma(L) \]
is the ball quotient complement of an irreducible Heegner divisor $\Delta$ \cite{Basak 2006}.
In his monstrous proposal Allcock made a remarkable conjecture \cite{Allcock 2006}.

\begin{conjecture}[Allcock]\label{Allcock conjecture}
The quotient of the orbifold fundamental group
\[ G=\Pi_1^{\mathrm{orb}}(\mathbb{B}^{\circ}/\Gamma) \]
by the normal subgroup $N$ generated by the squares of the meridians is the bimonster $M\wr2$.
By a meridian is meant a small loop in $\mathbb{B}^{\circ}/\Gamma$ that encircles $\Delta$
once positively at a generic point of $\Delta$.
\end{conjecture}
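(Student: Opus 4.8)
The plan is to reduce the conjecture to the two presentations of the bimonster already quoted, using as a bridge a description of $G$ by meridians. First I would fix the monodromy exact sequence
\[ 1\to\Pi_1(\mathbb{B}^{\circ})\to G\to\Gamma\to 1, \]
where $\Pi_1(\mathbb{B}^{\circ})$ is generated by the meridians and $\Gamma=\PU(L)$ by the triflections $t_{\varepsilon}$. The surjectivity of $A(Y_{555})\to\U(L)$ due to Allcock and Basak lets one lift the sixteen triflection generators of $\Gamma$ to meridians; the aim of this step is to show that these sixteen meridians $m_i$ generate $G$ and satisfy the braid and commutation relations of $Y_{555}$, so that $G$ is a quotient of the Artin group $A(Y_{555})$ for which filling in the $\Z/3$-orbifold divisor $\Delta$, i.e. imposing $m_i^3=1$, recovers $\Gamma$. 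Since $W(Y_{555})$ is infinite, $G$ must in fact carry further geometric relations beyond the Artin ones, and the whole problem is to determine which of these survive after imposing $N$.

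Granting such a meridian presentation, the quotient $G/N=G/\langle\langle m_i^2\rangle\rangle$ imposes $m_i^2=1$ and so is a quotient of the Coxeter group $W(Y_{555})$. By the theorem of Ivanov and Norton one has $M\wr2=W(Y_{555})/\langle s^{10}\rangle$, and by the theorem of Conway and Simons, via the maximal subtree $Y_{555}\subset I_{26}$, the same group is $W(I_{26})$ with all free $12$-gons deflated. The conjecture therefore splits into two assertions: (i) the spider relation $s^{10}=1$ --- equivalently the deflation of every free $12$-gon --- holds in $G/N$, which forces $G/N$ to be a quotient of $M\wr2$; and (ii) $G/N$ is nonabelian, which suffices for equality because the monster $M$ is simple, so that the only proper quotients of $M\wr2$ are the trivial group and $\Z/2$.

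The heart of the matter, and the source of the still conjectural nature of the result, is the geometric origin of the deflation relations in assertion (i). A free $12$-gon is a $\tilde{A}_{11}$ subdiagram, and its Eisenstein sublattice $L(\tilde{A}_{11})$ has a one-dimensional radical spanned by an isotropic vector $f\in L$; this $f$ determines a cusp of the ball $\mathbb{B}$. The stabiliser of $f$ in $\Gamma$ is a parabolic subgroup whose unipotent radical is a Heisenberg group, and the affine Coxeter group $W(\tilde{A}_{11})$ sits inside it with translation lattice isomorphic to $\Z^{11}$; deflation is precisely the statement that this translation lattice dies. The plan is to compactify $\mathbb{B}/\Gamma$ by adjoining the cusp, to pass to the local orbifold fundamental group of a neighbourhood of $f$, and to show there that each of the eleven affine translations is represented by a product of squares of the twelve meridians of the $12$-gon, so that it becomes trivial in $G/N$. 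This would exhibit the deflation relations as the reduction, modulo $N$, of the parabolic relations that $G$ already carries at its cusps.

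I expect this cusp analysis to be the genuine obstacle. Translating the purely combinatorial deflation relation into an identity among loops requires controlling the full parabolic monodromy at $f$ and then verifying the translation-equals-meridian-squares identity uniformly for all of the many $\Gamma$-inequivalent free $12$-gons at once; a rigorous treatment of this seems out of reach with present tools, which is exactly why the explanation remains conjectural. Assertion (ii) is a secondary difficulty: ruling out the collapse $G/N=\Z/2$ asks for an independent nonabelian finite quotient of $G/N$, whose natural candidate is the bimonster itself, so that completing the argument is ultimately tied to a geometric realisation of the $M\wr2$-action on the ball-quotient data. Granting both, Conway--Simons and Ivanov--Norton close the argument to the isomorphism $G/N\cong M\wr2$.
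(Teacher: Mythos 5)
You are reviewing a statement that is a \emph{conjecture}: the paper never proves it, and offers only conditional evidence for it, so the honest comparison is between your programme and the paper's programme. At the level of scaffolding they agree: lift the triflection generators of $\Gamma$ to meridians satisfying the Artin relations of a Coxeter diagram ($Y_{555}$ with $16$ generators in your version; the paper, following Basak, uses all $26$ nodes of $I_{26}$ via the homomorphism $\psi:A(I_{26})\rightarrow G$ and his conjectured surjectivity), note that killing meridian squares yields a quotient of the Coxeter group, and then close with Ivanov--Norton/Conway--Simons once the spider/deflation relations are established in $G/N$. Your point (ii) is also sound: since the only proper quotients of $M\wr2$ are trivial or $\Z/2$, any nonabelian quotient is all of $M\wr2$; and you are right that nontriviality of $G/N$ is a separate open problem --- the paper does not settle it either.

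The genuine gap is in your mechanism for deflation, which rests on a false premise about the lattice geometry. You claim that the sublattice of $L$ spanned by the twelve roots of a free $12$-gon has a one-dimensional radical spanned by an isotropic vector $f\in L$, which determines a cusp of $\mathbb{B}$ whose parabolic stabilizer contains the translation lattice of $W(\tilde{A}_{11})$. In fact the abstract lattice $L(\tilde{A}_{11})$ has a \emph{two}-dimensional kernel (Section 2 of the paper), and that entire kernel dies under the map into $L$: inside the Allcock lattice the twelve roots of a free $12$-gon span a \emph{nondegenerate} Lorentzian lattice of rank $10$, namely the Deligne--Mostow lattice $L^{10}=L_{\mathrm{DM}}$, the orthogonal complement of the complementary $A_4$-copy $L^4$. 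This is exactly the content of Section 8: the complexification of the face $P^J$ of the $26$-cell is the $9$-dimensional sub-ball $\mathbb{B}(L_{\mathrm{DM}})$, cut out as the intersection of the $40$ mirrors of $\U(L^4)$. So a free $12$-gon corresponds not to a cusp but to a totally geodesic sub-ball; the actual cusps of the Allcock quotient correspond to the parabolic configurations $3A_5$ and $4D_4$, the ideal vertices of the $26$-cell. Consequently there is no isotropic $f$, no Heisenberg unipotent radical, and no translation lattice at which your identity ``affine translation $=$ product of meridian squares'' could even be formulated. The paper's (still conjectural) explanation is entirely different: granting Conjecture~\ref{26-cell conjecture}, the twelve meridians of a free $12$-gon can be homotoped, via the orthogonal projection of $P$ onto its face $P^J$, into a tubular neighbourhood $\mathbb{N}^{\circ}$ of $\mathbb{B}^{\circ}/\Gamma(L_{\mathrm{DM}})$ inside $\mathbb{B}^{\circ}/\Gamma(L_{\mathrm{A}})$, which fibers over $\mathbb{B}^{\circ}/\Gamma(L_{\mathrm{DM}})$ with fiber $(\C\otimes L^4)^{\circ}/\U(L^4)$; killing meridian squares then gives $S_5\times S_{12}$ by Couwenberg's theorem (Scholium~\ref{scholium Couwenberg}) together with the Deligne--Mostow/Birman identification of $\Pi_1^{\mathrm{orb}}(\mathbb{B}^{\circ}/\Gamma(L^{10}))$ modulo meridian squares with $S_{12}$ (Scholium~\ref{scholium Birman--Looijenga}). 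It is this $S_{12}$, i.e.\ the modular interpretation of the sub-ball as $\mathcal{M}_{0,12}/S_{12}$, and not any cusp monodromy, that deflates the free $12$-gons.
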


The original evidence for Allcock was rather modest and based on the occurrence the $Y_{555}$
diagram both in the Ivanov--Norton theorem and in his description of the lattice $L$.
Further evidence has been supplied by Basak with the following theorem \cite{Basak 2006}.

\begin{theorem}[Basak]\label{Basak 26 theorem}
The Hermitian form of the Eisenstein lattice $L(I_{26})$ has a kernel of dimension $12$
and the quotient of $L(I_{26})$ by this kernel is equal to the Allcock lattice $L$.
\end{theorem}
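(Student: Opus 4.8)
The plan is to work directly with the Gram matrix of $L(I_{26})$ in the basis of the $26$ root vectors. Ordering the basis with the $13$ points of $\mathbb{P}^2(3)$ first and the $13$ lines second, the Hermitian form is encoded by the block matrix
\[ G=\begin{pmatrix} 3I_{13} & \theta A\\ -\theta A^{t} & 3I_{13}\end{pmatrix}, \]
where $A$ is the $13\times13$ point--line incidence matrix, $A_{pl}=1$ exactly when the point $p$ lies on the line $l$, and where I have used $\overline{\theta}=-\theta$. The only input from projective geometry I need is the classical incidence identity $AA^{t}=A^{t}A=3I_{13}+J$, with $J$ the all-ones matrix: this records that each point lies on $4$ lines (the diagonal $3+1$) and that two distinct points lie on exactly one common line (the off-diagonal $1$).

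First I would compute the radical. Solving $Gv=0$ for $v=(x,y)$, the second block gives $y=\tfrac{\theta}{3}A^{t}x$, and substituting into the first block and using $\theta^{2}=-3$ together with the incidence identity collapses it to $-Jx=0$. Since $J$ has rank $1$, the condition $Jx=0$ (i.e.\ $\sum_{p}x_{p}=0$) cuts out a $12$-dimensional space, while $y$ is then determined; hence the radical has dimension $12$, proving the first assertion.

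Next I would pin down the nondegenerate quotient $\bar L=L(I_{26})/\mathrm{rad}$ by diagonalising $G$. Writing $G=3I_{26}+N$ with $N=\left(\begin{smallmatrix}0&\theta A\\ -\theta A^{t}&0\end{smallmatrix}\right)$ Hermitian, one has $N^{2}=3\,\mathrm{diag}(AA^{t},A^{t}A)=3\,\mathrm{diag}(3I+J,\,3I+J)$. The eigenvalues of $3I+J$ are $16$ (once) and $3$ (twelve times), so $N^{2}$ has eigenvalues $48$ (twice) and $9$ ($24$ times); as $\mathrm{diag}(I,-I)$ conjugates $N$ to $-N$, its spectrum is symmetric about $0$, namely $\pm4\sqrt3$ (each once) and $\pm3$ (each twelve times). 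Therefore $G$ has eigenvalues $3\pm4\sqrt3$, together with $6$ (twelve times) and $0$ (twelve times): there are $13$ positive, $1$ negative and $12$ zero eigenvalues. This reconfirms the $12$-dimensional radical and shows that $\bar L$ is a nondegenerate Eisenstein lattice of rank $14$ and Lorentzian signature $(13,1)$, the same rank and signature as $L=L(Y_{544})$.

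It remains to upgrade this numerical match to an isometry $\bar L\cong L$, and this is where I expect the real work to lie. I see two routes. The first is to compute the discriminant form of $\bar L$ from the saturated radical inside $L(I_{26})$ and compare it with that of the Allcock lattice, concluding by uniqueness of $L$ in its genus. The second, more hands-on route is to produce an explicit copy of $Y_{544}$ (or of the $Y_{555}$ subtree, whose lattice is already known to have $L$ as its quotient by a rank-$2$ radical) inside $I_{26}$, verify that the images of its roots in $\bar L$ carry the prescribed $Y_{544}$ Gram matrix --- using that inner products descend unchanged to $\bar L$ --- and check that these images span $\bar L$ over $\mathcal{E}$, the spanning being exactly accounted for by the twelve radical relations. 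The delicate point in either route is controlling the integral rather than merely rational structure: one must ensure that the chosen roots generate the quotient over $\mathcal{E}$ and that no extra incidences of $\mathbb{P}^2(3)$ spoil the intended Gram matrix. It is this integral and combinatorial bookkeeping, and not the signature count, that constitutes the main obstacle.
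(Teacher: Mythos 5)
You get the first assertion right, and by a route genuinely different from the paper's. The paper never assembles the full Gram matrix: for each line $l$ it writes down the integral vector $\delta_l=-\theta\varepsilon_l+\sum_{p\sim l}\varepsilon_p\in L(I_{26})$, checks $\langle\delta_l,\varepsilon_q\rangle=0$ and $\langle\delta_l,\varepsilon_m\rangle=\theta$ for every point $q$ and line $m$ (two one-line case distinctions), and concludes that the twelve independent differences $\delta_l-\delta_m$ are null vectors spanning the kernel. Your block computation with the incidence identity $AA^{t}=3I+J$ finds the same space --- up to the convention of which argument is conjugate-linear, your parametrization $y=\pm\tfrac{\theta}{3}A^{t}x$ with $Jx=0$ is exactly the $\mathbb{C}$-span of the $\delta_l-\delta_m$ --- and your spectral argument additionally bounds the kernel dimension from above and produces the signature $(13,1)$, which the paper never computes explicitly. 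On this half your write-up is, if anything, more complete than the paper's two lines.

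The genuine gap is the theorem's second assertion, that the quotient $\bar L$ \emph{equals} the Allcock lattice $L$. What you actually prove is that $\bar L$ is a Lorentzian Eisenstein lattice of rank $14$; rank and signature come nowhere near determining such a lattice, and you then list two strategies without carrying out either. The genus route would need a uniqueness-in-genus theorem for $L$ (nowhere available here) plus the discriminant form of $\bar L$, which requires the saturated kernel over $\mathcal{E}$ --- integral data your real spectral computation does not see. The embedding route is the right one, and your own caveat is exactly where it bites: the kernel relations $\delta_l\equiv\delta_m$ express only $\theta(\varepsilon_l-\varepsilon_m)$, not $\varepsilon_l-\varepsilon_m$ itself, in terms of point roots, and $\theta$ is a prime, not a unit, of $\mathcal{E}$, so generation of $\bar L$ over $\mathcal{E}$ by the roots of a $Y_{555}$ subtree, together with the matching of Gram matrices, is a genuine integral verification rather than bookkeeping that can be waved through. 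To be fair, the paper is also laconic on this point (the result is credited to Basak), but its method is arranged precisely so that this step becomes accessible: the kernel is exhibited by explicit lattice vectors inside $L(I_{26})$, which is the input needed to compare the quotient integrally with $L=L(Y_{555})/\mathrm{rad}$. As submitted, your proposal proves the kernel statement but not the isomorphism $\bar L\cong L$.
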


This is a remarkable observation, but the proof is straightforward. For $l$ the index of a white node ($l$ a line) put
\[ \delta_l=-\theta\varepsilon_l+\sum_{p\sim l}\varepsilon_p \]
with $p\sim l$ meaning that the corresponding nodes are connected ($p$ a point on $l$). Then an easy verification yields
\[ \langle\delta_l,\varepsilon_q\rangle=0\;,\;\langle\delta_l,\varepsilon_m\rangle=\theta \]
for all for all black nodes $q$ and white nodes $m$. Just distinguish $q$ on $l$ or not on $l$, and $m$ equal $l$ or not equal $l$.
Hence $\delta_l-\delta_m$ is a null vector for any two white nodes $l,m$, and these vectors span the kernel of dimension $12$.

The quotient of the triflection representation yields a homomorphism
\[ A(I_{26})\rightarrow\U(L) \]
which a fortiori is surjective. By definition the orbifold fundamental group of
$\mathbb{B}^{\circ}/\Gamma$ gives rise to an exact sequence
\[ 1\rightarrow\Pi_1(\mathbb{B}^{\circ})\rightarrow G\stackrel{\pi}\rightarrow\Gamma\rightarrow1 \]
and Basak proved the following result and made the following conjecture \cite{Basak 2012}.

\begin{theorem}[Basak]
There exists a natural homomorphism 
\[ \psi:A(I_{26})\rightarrow G \] 
whose composition with $\pi:G\rightarrow\Gamma=\mathrm{PU}(L)$ is the triflection homomorphism 
$A(I_{26})\rightarrow\Gamma$ discussed above.
\end{theorem}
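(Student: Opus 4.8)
The plan is to build $\psi$ geometrically, by sending each Artin generator $T_i$ to a meridian of the corresponding mirror, and then to verify the defining relations of $A(I_{26})$ by a purely local analysis of the mirror arrangement. First I would fix a base point $z_0\in\mathbb{B}^{\circ}$ and, for each node $i$ of $I_{26}$ with root $\varepsilon_i$, consider the mirror $H_i=\{z\in\C\otimes L:\langle z,\varepsilon_i\rangle=0\}$ together with a suitably oriented small loop $\mu_i$ that is joined to $z_0$ by a fixed path and encircles the image of $H_i$ in $\Delta$ once. In a local normal coordinate $w$ to $H_i$ the triflection acts by $w\mapsto\zeta w$ with $\zeta=e^{2\pi i/3}$, so the downstairs coordinate is $u=w^{3}$ and a lift of $\mu_i$ carries $z_0$ to $t_{\varepsilon_i}(z_0)$. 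Hence $\pi(\mu_i)=t_{\varepsilon_i}$, and I would set $\psi(T_i):=\mu_i$. With this definition $\pi\circ\psi$ sends $T_i\mapsto t_{\varepsilon_i}$, which is exactly the triflection homomorphism; the only thing left to prove is that $\psi$ respects the braid and commutation relations of $A(I_{26})$.

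Since $\mathbb{B}$ is contractible, $\Pi_1(\mathbb{B}^{\circ})$ is normally generated by meridians, and, as all mirrors are $\Gamma$-conjugate and the $t_{\varepsilon_i}$ generate $\Gamma$, the whole of $G$ is generated by the $\mu_i$; by the Zariski--van Kampen principle its relations are supported on the codimension-two strata of the arrangement $\bigcup_i H_i$. I would therefore reduce the verification to a generic point of $H_i\cap H_j$. Restricting the Hermitian form to the span of $\varepsilon_i,\varepsilon_j$ gives a Gram matrix with diagonal $3$ and off-diagonal entry of norm $|\theta|^{2}=3$ (connected) or $0$ (disconnected); in both cases the determinant is positive and the form is positive definite. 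Hence $t_{\varepsilon_i}$ and $t_{\varepsilon_j}$ fix a common sub-ball $H_i\cap H_j$ of codimension two and generate a finite complex reflection group acting on the normal $\C^{2}$.

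For disconnected $i,j$ the two triflections commute, the normal arrangement is the normal crossing $\{xy=0\}$ with complement $(\C^{*})^{2}$, and the meridians satisfy $\mu_i\mu_j=\mu_j\mu_i$. For connected $i,j$ the relation $t_{\varepsilon_i}t_{\varepsilon_j}t_{\varepsilon_i}=t_{\varepsilon_j}t_{\varepsilon_i}t_{\varepsilon_j}$ already holds in $\Gamma$, so the group generated is the Shephard--Todd reflection group $G_4$ of order $24$; its four mirrors meet at the common fixed point, and the fundamental group of the complement of that arrangement is the braid group of $G_4$, namely the Artin group $A(A_2)=\langle a,b\mid aba=bab\rangle$. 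Reading off the two distinguished meridians gives $\mu_i\mu_j\mu_i=\mu_j\mu_i\mu_j$. These are precisely the defining relations of $A(I_{26})$, so $\psi$ extends to a homomorphism $A(I_{26})\to G$ lifting the triflection map.

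The main obstacle is the local computation for connected nodes: one must confirm that the positive-definite rank-two Eisenstein sublattice with braiding triflections realizes the full $G_4$ arrangement (four mirrors through one point, not merely the two lines $H_i,H_j$), and then invoke the known identification of the $G_4$ braid group with $A(A_2)$ to extract the braid relation for the two chosen meridians. A secondary technical point is making the choices of the $\mu_i$ and of their connecting paths to $z_0$ coherent, and establishing the Zariski--van Kampen presentation of $G$ in the orbifold setting, where one must in addition keep track of any non-reflection orbifold locus of $\mathbb{B}/\Gamma$ lying away from $\Delta$.
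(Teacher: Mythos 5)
Your broad strategy is the same as the one the paper attributes to Basak: send each Artin generator to a meridian of the corresponding mirror, check that it projects to the triflection, and verify the braid/commutation relations by a local analysis at codimension-two intersections (two orthogonal mirrors in the disconnected case, the four mirrors of the rank-two triflection group in the connected case). However, there is a genuine gap, and it sits exactly where you say the ``secondary technical point'' is. Meridians $\mu_i,\mu_j$ attached to a base point by \emph{arbitrary} connecting paths satisfy the braid or commutation relation only up to conjugation; to get the relation on the nose you must show that the two loops, paths included, can be \emph{simultaneously} homotoped inside $\mathbb{B}^{\circ}$ --- i.e.\ avoiding \emph{all} of the infinitely many mirrors of the arrangement, not just $H_i\cup H_j$ --- into the standard local picture near a generic point of $H_i\cap H_j$. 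This global deformation statement is the heart of the proof, not a footnote, and it is precisely what Basak's argument supplies: he takes the Weyl point $w_0$ as base point, shows the $26$ mirrors supporting the codimension-one faces of the $26$-cell $P$ are exactly the mirrors nearest to $w_0$, defines each $T_i$ by the geodesic from $w_0$ to the projection $w_i$ with a one-third turn, and proves the key geometric lemma that the interior of the convex hull of the four points $w_0,w_i,w_j,w_{ij}$ meets no mirror of $\mathbb{B}(L)$. Only then can $\gamma_i,\gamma_j$ be pushed to the curves $\gamma_{ij},\gamma_{ji}$ based near $w_{ij}$, after which the relation becomes the local statement you describe (settled by Couwenberg's work or by explicit homotopy). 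Your Zariski--van Kampen reduction presupposes this deformation and so assumes what must be proved; without a distinguished base point and controlled paths, the reduction fails.

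A secondary error: you assert that, since $\mathbb{B}$ is contractible, all mirrors are conjugate, and the $t_{\varepsilon_i}$ generate $\Gamma$, ``the whole of $G$ is generated by the $\mu_i$.'' That statement is not a consequence of these facts; it is exactly Basak's surjectivity conjecture, which the paper records as open. Fortunately it is not needed for the theorem at hand --- existence of $\psi$ requires only that the $26$ chosen meridians satisfy the $I_{26}$ relations --- but you should not present it as established, nor use it to justify that relations among the $\mu_i$ are detected on codimension-two strata. Also note a small imprecision in the local step: the fundamental group of the complement of the $G_4$ mirror arrangement is the \emph{pure} braid group of $G_4$; it is the orbifold fundamental group of the quotient (which is the relevant local group inside $G$) that is the braid group $A(A_2)$.
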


Basak makes a convenient choice of base point $w_0\in\mathbb{B}^{\circ}$, which he calls the Weyl point.
He shows that there are exactly $26$ mirrors in $\mathbb{B}$ at minimal distance from $w_0$.
The loop in $G$ starting at $w_0$ along the shortest geodesic to such a mirror, making a third turn near the mirror and
continuing geodesically to the image $t_iw_0$ is denoted by $T_i$. Using a computer algorithm
Basak shows that these $T_i$ satisfy the braid relations of the incidence diagram $I_{26}$.

\begin{conjecture}[Basak]
The homomorphism $\psi:A(I_{26})\rightarrow G$ is surjective.
\end{conjecture}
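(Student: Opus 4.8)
The plan is to use the exact sequence
\[ 1\rightarrow\Pi_1(\B^{\circ})\rightarrow G\stackrel{\pi}\rightarrow\Gamma\rightarrow1 \]
together with the surjectivity, recorded above, of the triflection homomorphism $\pi\circ\psi:A(I_{26})\rightarrow\Gamma$. Put $H=\psi(A(I_{26}))$ and $K=\Pi_1(\B^{\circ})$. Since $\pi(H)=\Gamma$ we have $G=HK$, so $\psi$ is surjective if and only if $K\subseteq H$; equivalently, the $26$ wall-crossing elements $\psi(T_1),\dots,\psi(T_{26})$ generate $G$.

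First I would describe $K$ topologically. As $\B$ is contractible and the mirrors form a locally finite arrangement of totally geodesic complex hyperballs, $K$ is normally generated in $G$ by meridians of $\Delta$; since Basak's divisor $\Delta$ is irreducible, all mirrors lie in a single $\Gamma$-orbit, all their meridians are conjugate in $G$, and a single meridian $m$ normally generates $K$. Because $\psi(T_1)$ makes a third turn about the nearest mirror $M_1$, its cube $m=\psi(T_1)^3$ is such a meridian and lies in $H$. The point that does not follow formally is that $K$, being the normal closure in $G$ of an element of $H$, need not be contained in $H$, as $H$ is not known to be normal. To remedy this I would exhibit a generating set of $K$ inside $H$. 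Every mirror equals $\gamma M_1$ for some $\gamma\in\Gamma$, and since the images $t_1,\dots,t_{26}$ of the generators already generate $\Gamma$, each $\gamma$ lifts to a word $h$ in the $\psi(T_j)$; then $h\,m\,h^{-1}\in H$ is a meridian about $\gamma M_1$. It remains to prove that these particular meridians generate $K$, which I would attempt by a van Kampen argument: endow $\B^{\circ}$ with a $\Gamma$-equivariant cellulation of Salvetti type, with one vertex per chamber in the $\Gamma$-orbit of a fundamental domain at the Weyl point $w_0$ and with $1$-cells the wall-crossings realised by the $\psi(T_j)$, so that every loop at $w_0$ is homotopic to an edge-gallery, hence to a product of the $h\,m\,h^{-1}$. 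This would yield $K\subseteq H$ and finish the argument.

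I expect this cellulation, and the van Kampen step resting on it, to be the main obstacle. In the complex hyperbolic setting the mirrors have real codimension two, so $\B^{\circ}$ is connected and admits no naive decomposition into chambers; one must complexify a real fundamental domain in the spirit of Salvetti and then control galleries across the infinite Allcock arrangement. Concretely, one needs a transitivity statement for $\Gamma$ on chambers strong enough to guarantee that the walls meeting the fundamental domain, i.e.\ the $26$ mirrors nearest $w_0$, supply all of the wall-crossing generators. It is exactly this combinatorial and geometric control over the infinite arrangement that is presently lacking: Basak's computer calculation yields the braid relations among the $T_i$ but not the generation of $G$ by them, which is why the surjectivity of $\psi$ remains conjectural.
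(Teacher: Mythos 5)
You have not proved this statement, and neither does the paper: in the paper it is precisely Basak's \emph{conjecture}, stated without proof, and the only support the paper offers is the observation that the triflection homomorphism $A(I_{26})\rightarrow\Gamma$ is surjective --- the very fact your argument takes as its starting point. So the question is not whether your route differs from the paper's (there is none), but whether your argument closes the conjecture. It does not, and to your credit you concede this in your final paragraph.

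Your reduction is sound as far as it goes: with $H=\psi(A(I_{26}))$ and $K=\Pi_1(\mathbb{B}^{\circ})$, surjectivity of $\pi\circ\psi$ gives $G=HK$, hence $\psi$ is surjective if and only if $K\subseteq H$; and you correctly identify that $K$ being the normal closure \emph{in $G$} of a meridian lying in $H$ does not place $K$ inside $H$. The gap is in the proposed remedy, and it is twofold. First, $K$ is generated by the $K$-conjugates of meridians, one conjugacy class per mirror, not by a single chosen meridian per mirror; so even granting your elements $h\,m\,h^{-1}\in H$, showing that this particular family generates $K$ is a genuinely new statement about the infinite mirror arrangement, not a formality of van Kampen type. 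Second, the Salvetti complex is a construction for complexified \emph{real} arrangements, where the chambers of the underlying real arrangement provide the vertices; for the arrangement of mirrors of the infinite complex reflection group $\Gamma=\PU(L)$ acting on $\mathbb{B}$ no such chamber structure exists (the mirrors have real codimension two, as you note), and no substitute cellulation of $\mathbb{B}^{\circ}$ adapted to $\Gamma$ is known. That missing combinatorial--geometric control is exactly why the statement is a conjecture in the paper, and why what Basak's work yields is the braid relations among the $T_i$ (via local analysis near the codimension-two faces $P^{ij}$ of the $26$-cell) rather than the generation of $G$ by them.
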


Another piece of evidence for this conjecture is that the triflection homomorphism $A(I_{26})\rightarrow \Gamma$ is surjective.

Let $\mathbb{B}(V)$ be the real hyperbolic ball of dimension $13$ through $w_0$ containing these $26$ geodesics departing from $w_0$.
Each of the $26$ mirrors intersects $\mathbb{B}(V)$ in a real hyperball.
If $P\subset\mathbb{B}(V)$ is the hyperbolic polytope bounded by these $26$ hyperballs,
then $P$ is an acute angled convex polytope of finite volume by the Vinberg criterion.
Based on the analogy with the Deligne--Mostow ball quotient we are inclined to believe 
that the following conjecture holds.

\begin{conjecture}
The interior of $P$ in $\mathbb{B}(V)$ is contained in $\mathbb{B}^{\circ}$.
\end{conjecture}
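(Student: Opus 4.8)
The plan is to transfer everything to the real hyperbolic geometry of $\mathbb{B}(V)$. Let $\sigma$ be the antiholomorphic involution of $\mathbb{B}$ with fixed locus $\mathbb{B}(V)$, induced by an antilinear isometry of $\C\otimes L$ that fixes $w_0$ and the real span $V$, so that $\overline{\langle z,\varepsilon\rangle}=\langle\sigma z,\sigma\varepsilon\rangle$. Since $\sigma$ permutes roots, a real point $z=\sigma z$ satisfies $\langle z,\varepsilon\rangle=0$ if and only if $\langle z,\sigma\varepsilon\rangle=0$, so the trace on $\mathbb{B}(V)$ of the mirror of a root $\varepsilon$ equals $\{z\in\mathbb{B}(V):z\perp\varepsilon,\ z\perp\sigma\varepsilon\}$. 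As $\mathrm{int}\,P\subset\mathbb{B}(V)$, the statement $\mathrm{int}\,P\subset\mathbb{B}^{\circ}$ is equivalent to the assertion that no such trace meets $\mathrm{int}\,P$. First I would split the roots into two classes: the \emph{real} roots, for which $\sigma\varepsilon\in\C\varepsilon$ and the trace is a real hyperball of codimension one, and the remaining roots, for which $\varepsilon$ and $\sigma\varepsilon$ are independent and the trace has real codimension two.

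The real roots are the easy case. The $26$ walls of $P$ are the traces of the $26$ real mirrors nearest to $w_0$, and the reflections $r_1,\dots,r_{26}$ in them generate a discrete reflection group $W$ for which, by the Vinberg criterion already invoked, $P$ is a fundamental Coxeter polytope; hence $\mathrm{int}\,P$ meets no $W$-translate of its walls. So for real roots it suffices to show that every real mirror is such a translate, i.e. that $r_1,\dots,r_{26}$ generate the full reflection group of all real roots. I would establish this by running Vinberg's algorithm on the real root system with control point $w_0$: the $26$ nearest mirrors are produced first, and one checks that no further real root bounds the chamber they cut out, so the algorithm terminates with exactly these $26$ simple roots. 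This is a finite verification of the same computational flavour as Basak's braid-relation check, and it identifies $P$ with the fundamental chamber of the entire real reflection group.

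The non-real roots are the crux, since a codimension-two trace is not separated from the open chamber by dimension and the chamber argument does not apply directly. The natural first reduction is to the real case: the plane $\C\varepsilon+\C\sigma\varepsilon$ is $\sigma$-stable, its fixed part is a real plane meeting the lattice in a rank-two integral quadratic lattice $M$, and any real root $\varepsilon'\in M$ is orthogonal to every $z$ in the codimension-two trace, so that trace lies inside the real mirror of $\varepsilon'$ and is disjoint from $\mathrm{int}\,P$ by the previous paragraph. The point I expect to be the main obstacle is exactly the existence of $\varepsilon'$: a rank-two lattice need not represent the value $3$, and when $M$ carries no root the reduction fails and the trace must be excluded by a direct argument.

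For those remaining roots I would argue by positivity, in the spirit of the Deligne--Mostow picture where the open chamber parametrises configurations of a fixed combinatorial type and only the adjacent degenerations, the walls, are ever encountered. Writing the walls as $\{\ell_i=0\}$ for real linear forms with $\mathrm{int}\,P=\{z:\ell_i(z)>0\ \text{for all }i\}$, and writing the complex condition $\langle z,\varepsilon\rangle=0$ as two real conditions $\ell'(z)=\ell''(z)=0$, it is enough to produce a \emph{certificate}: nonnegative constants $c_i$, not all zero, and reals $\lambda',\lambda''$ with
\[ \sum_i c_i\,\ell_i=\lambda'\ell'+\lambda''\ell'' \]
identically on $V$, for then the left side vanishes on the trace while being strictly positive on $\mathrm{int}\,P$, a contradiction. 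The hard part, and the reason the statement remains conjectural, is to exhibit such a certificate uniformly over the infinitely many $\Gamma$-orbit representatives of non-real roots rather than only for the finitely many roots near $w_0$; where no linear certificate exists one would have to bring in the hyperbolicity of the norm form $\langle\cdot,\cdot\rangle$ to show that the trace meets the positive region only outside the negative cone, and controlling this globally is exactly the obstacle I do not expect to overcome by routine means.
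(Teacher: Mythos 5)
The first thing to say is that the paper itself offers \emph{no} proof of this statement: it is posed as a conjecture (and restated in sharper form as Conjecture~\ref{26-cell conjecture}), supported only by the analogy with the Deligne--Mostow picture, where the corresponding fact for the $12$-cell (Scholium~\ref{scholium 12-cell}) follows from the Deligne--Mostow theorem, and by Basak's partial results quoted in Section~7: the $26$ walls of $P$ are exactly the mirrors nearest to the Weyl point $w_0$, the geodesic from $w_0$ to each projection $w_i$ meets no mirror before reaching it, and neither does the convex hull of $w_0,w_i,w_j,w_{ij}$. Your proposal is likewise not a proof --- you concede at the end that the global exclusion of mirrors is beyond your means --- so what must be judged is whether the part you present as settled is sound. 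It is not.

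The ``easy case'' rests on a false statement. You assert that the reflections $r_1,\dots,r_{26}$ in the walls of $P$ generate a discrete reflection group with $P$ as fundamental Coxeter polytope, ``by the Vinberg criterion already invoked''. The Vinberg theorem invoked in the paper (Section~5) is a finite-volume criterion for acute-angled polytopes and says nothing about fundamental domains; and $P$, while acute-angled, is emphatically not a Coxeter polytope. Two adjacent walls ($i\sim j$, with Gram entries $\langle e_i,e_i\rangle=3$, $\langle e_i,e_j\rangle=-\sqrt3$) meet at dihedral angle $\theta=\arccos(1/\sqrt3)$, which is not $\pi/m$ for any integer $m$; it is not even a rational multiple of $\pi$, since $2\cos\theta=2/\sqrt3$ is not an algebraic integer. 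Consequently $r_ir_j$ is a rotation of infinite order generating a dense subgroup, the group $\langle r_1,\dots,r_{26}\rangle$ is not discrete, $P$ is not a fundamental domain of any reflection group, and there is no real root system on which Vinberg's algorithm could be run. Relatedly, the $r_i$ do not even preserve the lattice: $r_p(\varepsilon_l)=\varepsilon_l\pm\tfrac{2}{3}\theta\varepsilon_p\notin L$ for $p\sim l$, since $\tfrac{2}{3}\theta\notin\mathcal{E}$; the only isometry of $L$ attached to a wall is the order-three triflection $t_i$, which does not act on $\mathbb{B}(V)$. So the real-root case is exactly as open as the codimension-two case, and your real/non-real dichotomy buys nothing. (Two smaller points: by Basak's result quoted in the paper all roots form a single $\Gamma$-orbit, so the difficulty is not ``infinitely many orbits'' but the infinitely many individual mirrors that can approach $P$ near its ideal vertices; and your Farkas-type certificate is a reformulation of the disjointness to be proved, not a method for proving it.) This breakdown of the real reflection-group picture in the triflection setting is precisely what makes the conjecture hard, and is why the paper can offer only Basak's local statements near $w_0$ as evidence.
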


The homomorphism $\psi:A(I_{26})\rightarrow G$ constructed by Basak induces a homomorphism $\varphi:W(I_{26})\rightarrow G/N$
with $N$ the normal subgroup of $G$ generated by the squares of the meridians. Our conjecture
that the interior of the polytope $P$ does not meet any mirrors can be used to show that for each free $12$-gon
in $I_{26}$ the homomorphism $\varphi$ factorizes through the deflation of the corresponding subgroup $W(\tilde{A}_{11})$. 
Hence $\varphi:W(I_{26})\rightarrow G/N$ factorizes through the bimonster $M\wr2$ by the Conway--Simons theorem.
This provides a good deal of evidence for the conjecture of Allcock.

I would like to thank the Mathematisches Forschungsinstitut at Oberwolfach for the kind hospitality while part of this work was done.
I also like to thank Tathagata Basak and Bernd Souvignier for useful comments and discussions.
This paper is dedicated to Eduard Looijenga for his $65$th birthday in gratitude for the friendship and the mathematics.

\section{Theorem of Deligne--Mostow}

Let $\mathcal{M}_{0,12}$ be the moduli space of $12$ ordered points on a curve $P$ of genus $0$
and let $\mathcal{M}_{0,12}/S_{12}$ be the moduli space of $12$ unordered points on $P$.
If $z\in\mathcal{M}_{0,12}/S_{12}$ is represented by a set of $12$ distinct complex points
$\{z_1,\cdots,z_{12}\}$ then the curve
\[C_z: y^6=\prod_1^{12}(x-z_i)\]
has genus $25$. The cyclic group $C_6$ of order $6$ acts on $C_z$ by multiplication on $y$
with quotient $P$. The ramified covering $C_z\rightarrow P$ has two intermediate coverings:
an order three covering $C_z/C_2\rightarrow P$ of genus $10$ and an order two hyperelliptic
covering $C_z/C_3\rightarrow P$ of genus $5$. This yields epimorphisms
\[ J(C_z)\rightarrow J(C_z/C_2)\;,\;J(C_z)\rightarrow J(C_z/C_3) \]
for the Jacobians, and leaves us with an Abelian variety $\mathrm{Prym}(C_z)$ with a
primitive action of $C_6$. Deligne and Mostow showed that the period map
\[ \mathrm{Per_{DM}}:\mathcal{M}_{0,12}/S_{12}\rightarrow\mathcal{A}_{10}^{\mathrm{PEL}}\]
is an isomorphism onto a ball quotient minus an irreducible Heegner divisor
\cite{Deligne--Mostow 1986},\cite{Mostow 1986}. The range of the period map denotes the period domain
of suitably Polarized Abelian varieties of genus $10$ with suitable Endomorphism and suitable Level structures.
To the best of my knowledge this viewpoint of Shimura PEL-theory has not been worked out
in the present example of Deligne--Mostow. See however the unpublished thesis by Casselman
for a partial attempt \cite{Shimura 1964},\cite{Casselman 1966}.

Let $L^n=L(A_n)$ be the Eisenstein lattice associated with the Coxeter diagram $A_n$
in the notation of the previous section. The lattice $L^n$ is positive definite for $n=1,2,3,4$.
The Hermitian form on the lattice $L^5$ has a one dimensional kernel with quotient $L^4$.
For $n=6,7,8,9,10$ it is Lorentzian, and the Hermitian form on $L^{11}$ has a one dimensional kernel
with quotient $L^{10}$. Moreover the Hermitian form on the Eisenstein lattice
$L(\tilde{A}_{11})$ has a two dimensional kernel with quotient again $L^{10}$.
We shall call the Lorentzian Eisenstein lattice $L^{10}$ the Deligne--Mostow lattice.
This explains why we have three compatible triflection representations
\[ A(A_{10}),A(A_{11}),A(\tilde{A}_{11})\longrightarrow\U(L^{10}) \]
on the Deligne--Mostow lattice $L^{10}$. Note that the Artin group $A(A_n)$
is just the original Artin braid group $\mathrm{Br}_{(n+1)}$ on $(n+1)$ strands
from $\C$. Likewise the Artin group $A(\tilde{A}_n)$ is the braid group on
$(n+1)$ strands from $\C^{\times}$.

We can construct a ball quotient $\mathbb{B}/\Gamma(L^{10})$ of dimension $9$
associated with the Deligne--Mostow lattice $L^{10}$ in the same way as was done
for the Allcock lattice $L$ in the previous section. Likewise the complement of all
mirrors is denoted $\mathbb{B}^{\circ}(L^{10})$ with quotient
$\mathbb{B}^{\circ}/\Gamma(L^{10})$. The Deligne--Mostow period map in explicit form
\[ \mathrm{Per_{DM}}:\mathcal{M}_{0,12}/S_{12}\rightarrow\mathbb{B}^{\circ}/\Gamma(L^{10}) \]
is an isomorphism of orbifolds. Their proof is geometric in nature, and requirs a careful
analysis on the left geometric and the right arithmetic side. The period map extends to
an isomorphism from the Hilbert--Mumford compactification via geometric invariant theory
for the group $\SL(2,\C)$ on degree $12$ binary forms onto the Baily--Borel compactification
of the ball quotient. The stable locus where no more than $5$ points collide is mapped
onto the ball quotient. The minimal strictly semistable locus is a single point with the collision
into two groups of $6$ points which corresponds to the unique cusp of the ball quotient
in the Baily--Borel compactification.

\section{Theorem of Couwenberg}

Consider the complex vector space $\mathcal{V}_5=\{z=(z_1,\cdots,z_5)\in\C^5;\sum z_i=0\}$
with the reflection representation of the symmetric group $S_5$.
The Coxeter group $S_5=W(A_4)$ has standard generators $s_i$ of order two ($i=1,\cdots,4$),
and together with the braid relations this is the Coxeter presentation of $S_5$.
The elementary symmetric functions $\sigma_2,\cdots,\sigma_5$ of degrees $2,\cdots,5$
are a basis for the ring of invariant polynomials. The discriminant polynomial
\[ D(\sigma_2,\cdots,\sigma_5)=\prod_{i\neq j}(z_i-z_j) \]
is the square of the product of the $10$ mirror equations,
and $D=\ast\sigma_5^4+\cdots$ is an explicit polynomial in $\sigma_2,\cdots,\sigma_5$.

Because the Hermitian form on the Eisenstein lattice $L^4$ is positive definite
the group $\U(L^4)$ is finite. Coxeter has shown that the triflection representation
\[ A(A_4)\rightarrow\U(L^4) \]
is surjective, and the cubic relations $t_i^3=1$ together with the braid relations give
a presentation of $\U(L^4)$. His proof was by computer verification \cite{Coxeter 1967}.

By the Chevalley theorem the ring of invariant polynomials on $\C\otimes L^4$ is a polynomial
algebra on four homogeneous generators, whose degrees are computed to be $12,18,24,30$.
There are $40$ mirrors and the discriminant is the cube of the product of the $40$ mirror equations.
Orlik and Solomon have shown that the generating homogeneous invariants can be chosen in such a way,
that the discriminant polynomial has the exact same expression as the discriminant polynomial
$D(\sigma_2,\cdots,\sigma_5)$ for the symmetric group $S_5$.
Their proof was again by computer verification \cite{Orlik--Solomon 1988}.

In his thesis Couwenberg has explained these results in a geometrically meaningful way
\cite{Couwenberg 1994}. One can think of his proof as the statement that the period map
\[ \mathrm{Per_C}:\mathcal{V}_5/S_5\rightarrow(\C\otimes L^4)/\U(L^4) \]
is an isomorphism of orbifolds. The Couwenberg period map is defined in terms of
similar Appell--Lauricella hypergeometric functions associated with a configuration
of $6$ points on a curve $P$ of genus $0$, one point with multiplicity $7$ (say at $\infty$)
together with $5$ unordered points  with multiplicity $1$ (on the affine line). Whereas the Deligne--Mostow
period map is related to the geometric invariant theory of the semistable points for binary forms
of degree $12$ the Couwenberg period map is related to the unstable points in the null cone.
Therefore
\[ \Pi_1^{\mathrm{orb}}((\C\otimes L^4)^{\circ}/\U(L^4))=\Pi_1(\mathcal{V}_5^{\circ}/S_5)=\mathrm{Br}_5 \]
is just the Artin braid group on 5 strands. Note that the orbifold fundamental group
and the ordinary fundamental group are the same by standard Coxeter group theory.
We arrive at a similar conclusion as before.

\begin{scholium}\label{scholium Couwenberg}
The quotient of the orbifold fundamental group
\[ G(L^4)=\Pi_1^{\mathrm{orb}}((\C\otimes L^4)^{\circ}/\U(L^4))=\mathrm{Br}_5 \]
by the subgroup generated by the squares of the meridians is the symmetric group $S_5$.
\end{scholium}

The group $S_5$ is just the Galois group of the ramified covering
\[ \mathcal{V}_5\rightarrow\mathcal{V}_5/S_5 \]
for the natural action of $S_5$.

Couwenberg obtained similar results for $S_{n+1}=W(A_n)$ acting on $\mathcal{V}_{n+1}$
and $A(A_n)\rightarrow\U(L^n)$ for $n=1,2,3,4$. The finite groups $\U(L^n)$ have $1,4,12,40$ mirrors and are the
triflection groups $\mathrm{ST}m$ for $m=3,4,25,32$ in the Shephard--Todd list \cite{Shephard--Todd 1954},\cite{Coxeter 1967}. 

\section{The orbifold fundamental group $\Pi_1^{\mathrm{orb}}(\mathcal{M}_{0,n}/S_n)$}

The orbifold fundamental group of $\mathcal{M}_{0,n}/S_n$ has been described by Looijenga
as a quotient of the affine Artin group $A(\tilde{A}_{n-1})$ with explicit relations \cite{Looijenga 1998} as follows.
Let $X$ be $\mathbb{C}^{\times}$, $\mathbb{C}$ or $\mathbb{P}=\mathbb{C}^{\times}\sqcup\{0,\infty\}$,
and let us denote by $X(n)$ the configuration space of (unordered) subsets of $X$ of cardinality $n$.
The braid group of $X$ with $n$ strands $\mathrm{Br}_n(X)$ is the fundamental group of $X(n)$.
The latter requires the choice of a base point and so is only defined up to conjugacy.
The group $\mathrm{Homeo}(X)$ of homeomorphism of $X$ acts also on $X(n)$.
The image of $\Pi_1(\mathrm{Homeo}^0(X),1)$ in $\mathrm{Br}_n(X)$ is a normal subgroup,
and the quotient shall be referred to as the braid class group $\mathrm{BrCl_n}(X)$ on $n$ strands in $X$.

First consider the case $X=\mathbb{C}^{\times}$. Take as base point $\sqrt[n]{1}$ the set of $n$th roots of $1$.
There are two special elements $R$ and $T$ in $\mathrm{Br}_n(\mathbb{C}^{\times})$:
$R$ is given by the loop of the rotation of $\sqrt[n]{1}$ over $\exp(2\pi it/n)$ for $t\in[0,1]$,
while T is represented by the loop that leaves all elements of $\sqrt[n]{1}$ in place
except $1$ and $\exp(2\pi i/n)$ which are interchanged by a counterclockwise half turn
along the circle with center $[1+\exp(2\pi i/n)]/2$ and radius $|1-\exp(2\pi i/n)|/2$ (say $n\geq5$).
These two elements generate $\mathrm{Br}_n(\C^{\times})$, but in order to get a more useful presentation
it is better to enlarge the number of generators by putting $T_k=R^kTR^{-k}$ for $k\in\mathbb{Z}/n\mathbb{Z}$.
The elements $T_k$ satisfy the affine Artin relations
\[ T_kT_{k+1}T_k=T_{k+1}T_kT_{k+1}\;,\;T_kT_l=T_lT_k \]
for all $k,l\in\mathbb{Z}/n\mathbb{Z}$ with $k-l\neq\pm1$, and together with the obvious relations
\[ RT_kR^{-1}=T_{k+1} \]
this gives a presentation of $\mathrm{Br}_n(\mathbb{C}^{\times})$ with generators
$R,T_0,\cdots,T_{n-1}$. The element $R^n$ comes from a loop in $\mathbb{C}^{\times}\subset\mathrm{Homeo}^0(\mathbb{C}^{\times})$.
Hence $R^n$ dies in $\mathrm{BrCl}_n(\mathbb{C}^{\times})$ and in fact $\mathrm{BrCl}_n(\mathbb{C}^{\times})$
is obtained from $\mathrm{Br}_n(\mathbb{C}^{\times})$ by imposing the single extra relation $R^n=1$.

Next consider the case $X=\mathbb{C}$. It is easy to check that the elements
$R,T_0,T_1,\cdots, T_{n-1}$ satisfy in $\mathrm{Br}_n(\mathbb{C})$ the additional relations
\[ R=T_1T_2\cdots T_{n-1}=T_2\cdots T_{n-1}T_0=\cdots=T_0T_1\cdots T_{n-2} \]
by filling in the origin $0$. For example, for $n=12$ the picture

\begin{center}
\psset{unit=0.7mm}
\begin{pspicture}*(-30,-25)(30,25)

\psset{arrowscale=1,arrows=->}
\pscircle(0,0){20}
\psarc[arrowsize=1.5mm](0,0){20}{15}{50}
\psarc[arrowsize=1.5mm](0,0){20}{45}{80}
\psarc[arrowsize=1.5mm](0,0){20}{75}{110}
\psarc[arrowsize=1.5mm](0,0){20}{105}{140}
\psarc[arrowsize=1.5mm](0,0){20}{135}{170}
\psarc[arrowsize=1.5mm](0,0){20}{165}{200}
\psarc[arrowsize=1.5mm](0,0){20}{195}{230}
\psarc[arrowsize=1.5mm](0,0){20}{225}{260}
\psarc[arrowsize=1.5mm](0,0){20}{255}{290}
\psarc[arrowsize=1.5mm](0,0){20}{285}{320}
\psarc[arrowsize=1.5mm](0,0){20}{315}{350}
\psbezier[arrowsize=1.5mm]{->}(20,0)(0,-18)(-11,-11)(-13,0.5)
\psbezier[arrowsize=0.1mm](-13,0)(-11,11)(0,15)(17.32,10)

\psdot[dotstyle=Bo](0,0)
\psdot(20,0)
\psdot(17.32,10)
\psdot(10,17.32)
\psdot(0,20)
\psdot(-10,17.32)
\psdot(-17.32,10)
\psdot(-20,0)
\psdot(-17.32,-10)
\psdot(-10,-17.32)
\psdot(0,-20)
\psdot(10,-17.32)
\psdot(17.32,-10)

\end{pspicture}
\end{center}
shows that the loop $T_1\cdots T_{11}$ is homotopic to $R$ if the origin is filled in.
This gives the familiar presentation of $\mathrm{Br}_n(\mathbb{C})$ with generators
$T_1,\cdots,T_{n-1}$ and the usual Artin relations
\[ T_kT_{k+1}T_k=T_{k+1}T_kT_{k+1}\;,\;T_lT_m=T_mT_l \]
for $k,k+1,l,m\in\{1,\cdots,n-1\}$ and $l-m\neq\pm1$.
For each $\sigma\in S_{n-1}$ the Garside element $R^n=(T_{\sigma(1)}\cdots T_{\sigma(n-1)})^n$ in $\mathrm{Br}_n(\mathbb{C})$
is independent of the permutation $\sigma\in S_{n-1}$, and its square $R^{2n}$ generates the center of $\mathrm{Br}_n(\mathbb{C})$
for $n\geq3$ \cite{Deligne 1972}.

Finally consider the case that $X=\mathbb{P}$ is the projective line.
It is easy to check that the elements $R,T_0,T_1,\cdots,T_{n-1}$ satisfy in $\mathrm{Br}_n(\mathbb{P})$ the additional relations
\[ R=T_1T_2\cdots T_{n-1}\;,\;R^{-1}=T_{n-1}T_{n-2}\cdots T_1 \]
by filling in $0$ and $\infty$ respectively. Since $T_1T_2\cdots T_{n-1}$ and $T_{n-1}T_{n-2}\cdots T_1$ have the same $n$th power
in $\mathrm{Br}_n(\mathbb{C})$ the above relations already imply that $R^{2n}$ dies in $\mathrm{Br}_n(\mathbb{P})$.
This gives the presentation of $\mathrm{Br}_n(\mathbb{P})$ due to Fadell and van Buskirk \cite{Fadell--van Buskirk 1962}.
In the braid class group $\mathrm{BrCl}_n(\mathbb{P})$ we already have the relation $R^n=1$ from $\mathrm{BrCl}_n(\mathbb{C}^{\times})$.
Since $\mathrm{BrCl}_n(\mathbb{P})$ is the same thing as the orbifold fundamental group $\Pi_1^{\mathrm{orb}}(\mathcal{M}_{0,n}/S_n)$
we arrive at the presentation with generators $T_1,\cdots,T_{n-1}$ and relations the usual Artin relations together with
\[ T_1\cdots T_{n-2}T_{n-1}^2T_{n-2}\cdots T_1=1\;,\;(T_1T_2\cdots T_{n-1})^n=1 \]
which was obtained by Birman \cite{Birman 1975}.

Combining these results with the Deligne--Mostow period map we arrive at the following conclusion,
which should be thought of as a positive answer to the analogue of the conjecture of Allcock
for the Deligne--Mostow lattice $L^{10}$ rather than the Allcock lattice $L$.

\begin{scholium}\label{scholium Birman--Looijenga}
The quotient of the orbifold fundamental group
\[ G(L^{10})=\Pi_1^{\mathrm{orb}}(\mathbb{B}^{\circ}/\Gamma(L^{10})) \]
by the subgroup generated by the squares of the meridians is the symmetric group $S_{12}$.
\end{scholium}

The group $S_{12}$ is just the Galois group of the covering
\[ \mathcal{M}_{0,12}\rightarrow\mathcal{M}_{0,12}/S_{12} \]
for the natural action of $S_{12}$.

\section{Acute angled polytopes in real hyperbolic space}

Let $V$ be a real vector space of finite dimension $n+1$ with a symmetric bilinear form 
$\langle\cdot,\cdot\rangle$ of Lorentzian signature $(n,1)$. The set 
\[ \mathbb{B}(V)=\mathbb{P}(\{v\in V;\langle v,v\rangle<0\})\subset\mathbb{P}(V) \]
is a model of real hyperbolic space of dimension $n$. Suppose we have given a spanning subset 
$\{e_i;i\in I\}$ of $V$ such that its Gram matrix $G$ with entries $g_{ij}=\langle e_i,e_j\rangle$ 
satisfies $g_{ii}>0$ and $g_{ij}\leq0$ for all $i\neq j$. The set
\[ P=\mathbb{P}(\{v\in V;\langle v,v\rangle<0,\langle v,e_i\rangle\geq0\;\forall i\in I\}) \]
is called an acute angled convex polytope in the hyperbolic space $\mathbb{B}(V)$. 
We associate with this given set $\{e_i;i\in I\}$ a Coxeter diagram with nodes labeled by $I$ 
and two nodes $i,j\in I$ are connected if $g_{ij}<0$. 

For the theory of hyperbolic reflection groups such polytopes have been studied to a great extent 
by Vinberg \cite{Vinberg 1980}. A subset $J\subset I$ is called elliptic, parabolic 
or hyperbolic if the Gram matrix $G_J$ of the subset $\{e_j;j\in J\}$ is positive definite, 
positive semidefinite, or indefinite respectively. For $J\subset I$ an elliptic subset the face 
\[ P^J=\mathbb{P}(\{v\in V;\langle v,v\rangle<0,\langle v,e_i\rangle\geq0\;
\forall i\notin J,\langle v,e_j\rangle=0\;\forall j\in J\}) \]
of $P$ is not empty (by the Perron--Frobenius theorem) and of codimension equal to the cardinality 
$|J|$ of $J$. It can be shown that all faces of $P$ in $\mathbb{B}(V)$ are of this form. 
Moreover the orthogonal (geodesic) projection of $\mathbb{B}(V)$ onto the codimension $|J|$ 
hyperbolic subspace of $\mathbb{B}(V)$ containing the face $P^J$ maps the polytope $P$ 
onto its face $P^J$. 

The polytope $P$ has finite hyperbolic volume if and only if 
\[ \mathbb{P}(\{v\in V;v\neq0,\langle v,e_i\rangle\geq0\;\forall i\in I\})
\subset\mathbb{P}(\{v\in V;v\neq0,\langle v,v\rangle\leq0\}) \]
but this can be cumbersome to check in concrete examples. A subset $J\subset I$ is called 
critical if $J$ is not elliptic, but $K$ is elliptic for all proper subsets $K$ of $J$. 
Clearly critical subsets of $I$ are connected subsets of the Coxeter diagram. 
For $J$ a subset of $I$ we denote by $Z(J)$ the subset of $I$ of all nodes that 
are not connected to $J$. The next theorem is a special case of a more general result of Vinberg. 

\begin{theorem}[Vinberg] Suppose $P$ is an accute angled polytope in $\mathbb{B}(V)$ as above, 
such that each critical subset $J$ of $I$ is parabolic. Then the polytope $P$ has finite volume 
in $\mathbb{B}(V)$ if and only for each critical (parabolic) subset $J$ of $I$ the subset 
$N(J)=J\sqcup Z(J)$ is still parabolic with $G_{N(J)}$ of rank $n-1$. 
\end{theorem}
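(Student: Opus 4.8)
The plan is to reduce the finite-volume question to a purely combinatorial statement about the maximal parabolic subsets of $I$, and then to identify these maximal parabolics with the sets $N(J)$. First I would rephrase the volume condition geometrically. By the criterion recalled just above, $P$ has finite volume precisely when the projective closure $\overline{P}=\mathbb{P}(\{v\neq0;\langle v,e_i\rangle\geq0\ \forall i\})$ is contained in the closed ball $\mathbb{P}(\{v\neq0;\langle v,v\rangle\leq0\})$. Since the closed ball is projectively convex and $\overline{P}$ is a convex polytope, this holds if and only if every vertex of $\overline{P}$ lies in the closed ball. For a vertex $v$ with active set $J_v=\{i\in I;\langle v,e_i\rangle=0\}$ one has $\dim\mathrm{span}\{e_i;i\in J_v\}=n$, and $v$ is timelike, null, or spacelike according as $G_{J_v}$ is elliptic of rank $n$ (a finite vertex), parabolic of rank $n-1$ (an ideal vertex, i.e.\ a cusp), or hyperbolic of signature $(n-1,1)$. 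Thus $P$ has finite volume if and only if no vertex of the cone has a hyperbolic active set.

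Next I would prove that a parabolic subset containing a critical $J$ can only grow orthogonally to $J$. Let $J$ be critical, hence parabolic and connected; by Perron--Frobenius its radical is spanned by a null vector $\delta_J=\sum_{j\in J}m_je_j$ with all $m_j>0$, so $\langle\delta_J,e_k\rangle=\sum_j m_j g_{jk}<0$ for every node $k$ connected to $J$. If $K\supseteq J$ is any parabolic subset, then $\delta_J\in\mathrm{span}(K)$ is null, hence lies in the radical of the positive semidefinite form $G_K$; therefore $\langle\delta_J,e_k\rangle=0$ for all $k\in K$, which forces $K$ to contain no node connected to $J$. Consequently $K\subseteq J\sqcup Z(J)=N(J)$.

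Then I would show that $N(J)$ is automatically parabolic. Because $Z(J)$ consists of nodes orthogonal to $J$, the Gram matrix splits as $G_{N(J)}=G_J\perp G_{Z(J)}$, so it suffices to see that $G_{Z(J)}$ is positive semidefinite. If it were indefinite it would contain a critical, hence parabolic, subset $J'\subseteq Z(J)$; then $\delta_{J'}$ is null and orthogonal to $\delta_J$ (as $J'\perp J$), so the two are proportional in $V$, a Lorentzian space admitting no totally isotropic plane. Moreover an indefinite $G_{Z(J)}$ would furnish a timelike $t\in\mathrm{span}(Z(J))$, and $t\perp\delta_J$ because $\mathrm{span}(Z(J))\perp\mathrm{span}(J)$; but in signature $(n,1)$ no timelike vector is orthogonal to a nonzero null vector, a contradiction. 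Hence $G_{Z(J)}$ is positive semidefinite and $N(J)$, containing the null vector $\delta_J$, is parabolic.

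Combining the two lemmas, the maximal parabolic subset containing a critical $J$ is exactly $N(J)$; conversely every maximal parabolic subset $M$ is non-elliptic and so contains a critical (parabolic) $J$, whence $M\subseteq N(J)$ with $N(J)$ parabolic, and by maximality $M=N(J)$. Under the standing hypothesis no critical subset is of the hyperbolic (Lann\'er) type, so the general theorem of Vinberg specializes to the statement that $P$ has finite volume if and only if every maximal parabolic subset has rank $n-1$; by the identification just made this is exactly the condition that $N(J)$ be parabolic of rank $n-1$ for every critical $J$. The step I expect to be the main obstacle is the geometric input underlying Vinberg's general criterion, namely that a maximal parabolic of rank strictly less than $n-1$ genuinely produces a spacelike vertex of the cone rather than a harmless lower-dimensional degeneration. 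To render this self-contained I would pass to the link at the cusp using the orthogonal face-projection recalled in the text, realize the transverse polytope at $\delta_J$ as a Euclidean Coxeter cell whose compactness is equivalent to full rank $n-1$, and run a dimension induction; the delicate point is to control the possible linear dependences among the $e_i$ (as occur for cusps of type $\tilde{A}\times\tilde{A}$), for which the proportionality of null radicals established in the third paragraph is precisely what is required.
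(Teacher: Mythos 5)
First, a point of reference: the paper itself offers no proof of this statement --- it is quoted with the remark that it ``is a special case of a more general result of Vinberg'' and the citation \cite{Vinberg 1980} --- so your proposal can only be judged on its own merits. Its genuinely correct and useful part is the combinatorial identification in your second, third and fourth paragraphs: for a critical (hence connected parabolic) $J$, Perron--Frobenius gives the radical vector $\delta_J=\sum_j m_je_j$ with all $m_j>0$; any parabolic $K\supseteq J$ must satisfy $\langle\delta_J,e_k\rangle=0$ for $k\in K$ and hence $K\subseteq N(J)$; and $N(J)$ is itself parabolic, because an indefinite $G_{Z(J)}$ would produce a timelike vector in $\mathrm{span}(Z(J))$ orthogonal to the nonzero null vector $\delta_J$, which is impossible in signature $(n,1)$. (Your intermediate suggestion that the proportionality $\delta_{J'}\propto\delta_J$ is itself contradictory is not correct --- this proportionality genuinely occurs, e.g.\ for the $3A_5$ and $4D_4$ cusps of the $26$-cell --- but your second argument via the timelike vector is right and suffices.) Thus maximal parabolic subsets are exactly the sets $N(J)$, which is precisely the step that exhibits the paper's statement as a \emph{special case} of Vinberg's general criterion.

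However, the proposal has a genuine gap at its center. In the final paragraph you invoke ``the general theorem of Vinberg'' to pass from this combinatorics to the finite-volume equivalence; that is circular, since Vinberg's theorem is what is to be proved, and you concede the point by calling the geometric core ``the main obstacle'' and offering only a plan (cusp links, Euclidean cells, dimension induction) rather than an argument. Concretely, the implication ``all $N(J)$ parabolic of rank $n-1$ $\Rightarrow$ no spacelike vertex of the cone'' is never established: a spacelike vertex has a hyperbolic active set $J_v$, which contains some critical $J$, but since $J_v$ is not parabolic your lemma from paragraph two says nothing about it, so nothing in the proposal excludes it. There is a second, reparable, flaw in the reduction of paragraph one: ``all vertices in the closed ball implies the polytope lies in the closed ball'' is false for projective polytopes because of the antipodal identification. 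The extreme rays of the cone $C$ could a priori lie in opposite components of the light cone, and then $\mathbb{P}(C)$ is the segment ``through infinity,'' which leaves the ball even though both vertices lie on its boundary; for instance $C$ spanned by $(1,0,1)$ and $(1,0,-1)$ in signature $(2,1)$. Under the standing hypotheses this cannot happen --- using that $G_J^{-1}$ has nonnegative entries for elliptic $J$ (a Stieltjes-matrix fact, i.e.\ the same Perron--Frobenius input the paper invokes for nonemptiness of faces) one shows every causal extreme ray of $C$ lies in the same component as a fixed interior point of $P$ --- but that argument must actually be supplied. Without it, and without a self-contained replacement for the appeal to Vinberg's general criterion, the proposal is not a proof.
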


The subset $N(J)=J_1\sqcup\cdots\sqcup J_r$ in the theorem is a disjoint union of parabolic subdiagrams, 
and corresponds to an ideal vertex $P^{N(J)}$ of $P$. The local structure of $P$ near such 
an ideal vertex is a product of an interval $(0,\varepsilon)$ with a product of $r$ simplices 
of dimensions $|J_1|-1,\cdots,|J_r|-1$. 

\section{The $12$-cell of dimension 9}

The Eisenstein lattice $L^{10}=L(A_{10})$ is equal to the quotient of $L(\tilde{A}_{11})$ by its kernel. 
It has the roots $\varepsilon_i$ for $i\in\mathbb{Z}/12\mathbb{Z}$ as a generating set. 
Suppose the nodes with even index are black and with odd index are white. 
Then the Hermitian form is given by
\[\langle\varepsilon_i,\varepsilon_i\rangle=3\;,\;\langle\varepsilon_i,\varepsilon_{i+1}\rangle=(-1)^i\theta\;,\;
\langle\varepsilon_j,\varepsilon_k\rangle=0 \]
for all $i,j,k\in\mathbb{Z}/12\mathbb{Z}$ with $|j-k|\geq2$.
We shall extend scalars from the Eisenstein integers $\mathbb{Z}[\omega]$ to 
$\mathbb{Z}[\sqrt[12]{1}]$ and put 
\[ e_{2j}=i\varepsilon_{2j}\;,\;e_{2j+1}=\varepsilon_{2j+1} \]
for all $j\in\mathbb{Z}/12\mathbb{Z}$, and write $V$ for their real span.
The Gram matrix of $\{e_i;i\in\mathbb{Z}/12\mathbb{Z}\}$ becomes 
\[ \langle e_i,e_i\rangle=3\;,\;\langle e_i,e_{i+1}\rangle=-\sqrt{3}\;,\;\langle e_j,e_k\rangle=0 \]
for all $i,j,k$ with $|j-k|\geq2$. The Coxeter diagram is of type $\tilde{A}_{11}$ and 
the connected subdiagrams of type $A_n$ are elliptic for $n=1,2,3,4$, parabolic for $n=5$, 
and hyperbolic for $n=6,7,8,9,10$. The critical subdiagrams are the subdiagrams of type $A_5$,
and deleting the two adjacent nodes in the $\tilde{A}_{11}$ diagram leaves us with another 
subdiagram of type $A_5$. The rank of the Gram matrix of these two disjoint $A_5$ diagrams is $8$, 
which is the rank of $L^{10}$ minus $2$. The conditions of the theorem of Vinberg are therefore 
satisfied and we conclude that the acute angled polytope
\[ P=\mathbb{P}(\{v\in V;\langle v,v\rangle<0,\langle v,e_i\rangle\geq0\;\forall i\in I\}) \]
has finite volume in $\mathbb{B}(V)$. It has an isometric action the cyclic group $C_{12}$ 
of order $12$ which acts in a simply transitive way on the $12$ codimension one faces. 
Its center is called the Weyl point $w_0$ which has equal distance to all $12$ codimension one faces. 
The acute angled polytope $P$ of finite hyperbolic volume and of dimension $9$ will be called the $12$-cell. 

Suppose $n\geq4$ and we are given $0<\mu_1,\mu_2,\cdots,\mu_n<1$ with $\sum\mu_j=2$.
If $z_1<z_2<\cdots<z_n$ are $n$ successive real points and $z=(z_1,\cdots,z_n)$ then the Schwarz--Christoffel transformation
\[ t\mapsto v(z;t)=\int_{z_n}^t(s-z_1)^{-\mu_1}(s-z_2)^{-\mu_2}\cdots(s-z_n)^{-\mu_n}ds \]
maps the upper half plane $\Im t>0$ conformally onto a convex polygon with vertices 
$v_1=v(z;z_1)>0,v_2=v(z;z_2),\cdots,v_n=v(z;z_n)=0$ and interior angles $(1-\mu_j)\pi$ at $v_j$. 
\begin{center}
\psset{unit=1mm}
\begin{pspicture}(-55,-5)(40,25)

\psline*[linecolor=lightgray](-50,0)(-13,0)(-13,20)(-50,20)
\psline(-55,0)(-8,0)
\psline(-37,-5)(-37,25)
\psdot(-45,0)
\psdot(-40,0)
\psdot(-34,0)
\psdot(-28,0)
\psdot(-24,0)
\psdot(-19,0)
\rput(-45,-3){$z_1$}
\rput(-40,-3){$z_2$}
\rput(-34,-3){$z_3$}
\rput(-28,-3){$z_4$}
\rput(-24,-3){$z_5$}
\rput(-19,-3){$z_6$}

\psline*[linecolor=lightgray](15,0)(25,0)(30,10)(20,20)(10,17)(8,7)(15,0)
\psline(15,0)(25,0)(30,10)(20,20)(10,17)(8,7)(15,0)
\psline(0,0)(38,0)
\psline(15,-5)(15,25)
\psline(30,10)(35,20)
\psset{arrowscale=1.3,arrows=->}
\psarc[linecolor=black](25,0){5}{0}{65}
\psarc[linecolor=black](30,10){5}{65}{135}
\rput(34,3){$\mu_1\pi$}
\rput(29,17){$\mu_2\pi$}
\psdot(15,0)
\psdot(25,0)
\psdot(30,10)
\psdot(20,20)
\psdot(10,17)
\psdot(8,7)
\rput(12,-3){$v_6$}
\rput(25,-3){$v_1$}
\rput(33,10){$v_2$}
\rput(20,23){$v_{3}$}
\rput(7,17){$v_{4}$}
\rput(5,6){$v_{5}$}

\end{pspicture}
\end{center}
The directed edge functions 
\[ w_j=w_j(z)=\int_{z_j}^{z_{j+1}}(s-z_1)^{-\mu_1}(s-z_2)^{-\mu_2}\cdots(s-z_n)^{-\mu_n}ds \]
satisfy $w_j=v_{j+1}-v_j$ and are called Lauricella $F_D$ hypergeometric functions of the variable $z$.
If we put $\omega_j=\exp{\pi i(\mu_1+\cdots+\mu_j)}$ then the edge lengths $l_j=\overline{\omega}_jw_j$ 
are positive real numbers (or functions of $z$) and satisfy the two linear relations
\[ \sum\omega_jl_j(z)=\sum\overline{\omega}_jl_j(z)=0 \]
making the span $V$ of the vectors $l=(l_1,\cdots,l_n)$ a real vector space of dimension $(n-2)$.

The cone $V_+=\{l\in V;l_j>0\;\forall j\}$ gets identified with the space of all such polygons
with vertices $v_1>0,v_2,\cdots,v_n=0$ and edge lengths $l_j$ from $v_j$ to $v_{j+1}$, and is called the 
polygon space of type $\mu=(\mu_1,\cdots,\mu_n)$. The spanning vector space $V$ carries a natural
Lorentzian inner product for which the norm $\langle l,l\rangle$ of $l\in V_+$ 
is equal to minus the area of the corresponding polygon. The Hermitian extension to
the complexification $\mathbb{C}\otimes_{\mathbb{R}}V$ is a monodromy invariant Lorentzian Hermitian form on
the space of Lauricella functions with parameter $\mu$. For proofs and further details we refer to the discussion
of the Lauricella $F_D$ function by Couwenberg in his thesis \cite{Couwenberg 1994}.

The parameter $\mu=(1/6,\cdots,1/6)$ is the relevant example. The set $V_+$ is identified with
the space of $12$-gons with vertices $v_1>0,v_2,\cdots,v_{12}=0$ and all interior angles equal to $5\pi/6$. 
The interior $\mathbb{P}(V_+)$ of the $12$-cell $P$ is just the space of such $12$-gons 
up to a positive scale factor. The central Weyl point $w_0$ in $P$ at equal distance to all $12$ 
codimension one faces corresponds to the regular $12$-gon.

\begin{scholium}\label{scholium 12-cell}  
The interior of the $12$-cell $P$ of dimension $9$ is contained in the mirror complement 
$\mathbb{B}^{\circ}(L^{10})$ of the Deligne--Mostow ball. The Weyl point $w_0$ in $P$
lies at equal distance to all $12$ codimension one faces. The cyclic group $C_{12}$
of order $12$ acts on $P$ by isometries leaving $w_0$ fixed.
\end{scholium}

\section{The Coxeter diagram $I_{26}$}

The projective plane $\mathbb{P}^2(3)$ over a field of $3$ elements has 13 points and 13 lines.
The incidence diagram $I_{26}$ has $26$ nodes of which $13$ are marked bold (the points) and 
$13$ hollow (the lines) with index $i$ taking values $1,2,3$.
A thin bond in the figure below indicates that the two end nodes are incident if their indices coincide, 
while a thick bond indicates that the end nodes are incident if their indices differ. So a thick bond 
represents altogether $6$ different bonds, and a thin bond just $3$. The diagram $I_{26}$ has valency $4$.
The group of diagram automorphisms of $I_{26}$ preserving the marking of the nodes is the group $\LL_3(3)=\PGL_3(3)$ 
of order $5,616=2^4\cdot3^3\cdot13$. The group $\LL_3(3).2$ of order $11,232$ obtained by adjoining an outer automorphism 
of projective duality between points and lines acts as group of automorphisms of the unmarked diagram $I_{26}$.

\begin{center}
\psset{unit=0.7mm}
\begin{pspicture}*(-40,-30)(40,45)

\psline(-30,0)(-30,40)
\psline(30,0)(30,40)
\psline(-30,40)(30,40)
\psline(-30,0)(-10,20)
\psline(-30,0)(-10,0)
\psline(-30,0)(-10,-20)
\psline(30,0)(10,20)
\psline(30,0)(10,0)
\psline(30,0)(10,-20)
\psline(-10,-20)(10,-20)
\psline[linewidth=1.1](-10,20)(10,20)
\psline[linewidth=1.3](-10,0)(10,-20)
\psline[linewidth=1.3](-10,-20)(10,0)
\psline(-10,20)(10,0)
\psline(-10,0)(10,20)

\psdot[dotsize=3,dotstyle=Bo](-30,0)
\psdot[dotsize=3](-10,0)
\psdot[dotsize=3,dotstyle=Bo](10,0)
\psdot[dotsize=3](30,0)
\psdot[dotsize=3](-10,20)
\psdot[dotsize=3,dotstyle=Bo](10,20)
\psdot[dotsize=3](-10,-20)
\psdot[dotsize=3,dotstyle=Bo](10,-20)
\psdot[dotsize=3](-30,40)
\psdot[dotsize=3,dotstyle=Bo](30,40)

\rput(-35,40){$a$}
\rput(35,40){$f$}
\rput(-35,0){$b_i$}
\rput(35,0){$e_i$}
\rput(-14,24){$a_i$}
\rput(14,24){$f_i$}
\rput(-14,-24){$c_i$}
\rput(14,-24){$d_i$}
\rput(-14,-4){$g_i$}
\rput(14,-4){$z_i$}

\end{pspicture}
\end{center}

Note that the subdiagram with nodes $ab_ic_id_ie_if_i$ (all $i$) by deleting the remaing nodes $fa_ig_iz_i$ (all $i$) and all bonds
connected to these remaining nodes is the $Y_{555}$ diagram, which is just a maximal subtree of $I_{26}$. Deleting the triple node $a$
of this $Y_{555}$ diagram shows that the $I_{26}$ diagram has a subdiagram of type $3A_5$. Adjoining $a_3$ and deleting $b_3$
shows that $I_{26}$ also has a subdiagram of type $A_4+\tilde{A}_{11}$ with the $4$ nodes $c_3d_3e_3f_3$ making $A_4$
and the $12$ nodes $ab_1c_1d_1e_1f_1a_3f_2e_2d_2c_2b_2$ making $\tilde{A}_{11}$.
The $\tilde{A}_{11}$ subdiagram is also called a free $12$-gon. The remaining $10$ nodes $a_1a_2b_3fg_iz_i$ (all $i$)
are each connected to both this $A_4$ subdiagram and this $\tilde{A}_{11}$ subdiagram. Hence $A_4$ and $\tilde{A}_{11}$
determine each other uniquely as the maximal disjoint complementary subdiagram in $I_{26}$.   
In our previous notation $Z(A_4)=\tilde{A}_{11}$ and $Z(\tilde{A}_{11})=A_4$.
Observe also that $ab_i$ and $d_ie_if_iz_i$ (all $i$) yields a subdiagram of $I_{26}$ of type $4D_4$.

\section{The $26$-cell of dimension $13$}

The set $I=\mathcal{P}\sqcup\mathcal{L}$ of $26$ vertices of the Coxeter diagram $I_{26}$
splits as a disjoint union of the $13$ points and the $13$ lines of $\mathbb{P}^2(3)$.
If $\varepsilon_i$ is the generating set of $L$ with Gram matrix
\[ \langle\varepsilon_i,\varepsilon_i\rangle=3\;,\;\langle\varepsilon_j,\varepsilon_k\rangle=0\;,\;\langle\varepsilon_p,\varepsilon_l\rangle=\theta \]
for all $i$, for all disconnected $j\neq k$ and for all connected $p\in\mathcal{P}$ and $l\in\mathcal{L}$
then we introduce a new set $\{e_i\}$ simply by
\[ e_p=i\varepsilon_p, e_l=\varepsilon_l \]
fior $p\in\mathcal{P}$ and $l\in\mathcal{L}$.
The Gram matrix of $e_i$ becomes the real symmetric matrix
\[ \langle e_i,e_i\rangle=3\;,\;\langle e_i,e_j\rangle=0\;,\;\langle e_j,e_k\rangle=-\sqrt{3}\]
for all $i,j,k$ with $i\neq j$ disconnected and $j\neq k$ connected.
If for each line $l\in\mathcal{L}$ we put $d_l=\sqrt{3}e_l+\sum_{p\sim l}e_p$ then it is easy to check that
\[ \langle d_l,e_q\rangle=0\;,\;\langle d_l,e_m\rangle=-\sqrt{3} \]
for all $q\in\mathcal{P}$ and $m\in\mathcal{L}$. 
Hence $d_l-d_m$ is a null vector for all $l,m\in\mathcal{L}$ and we conclude that the real vector 
space $V$ spanned by the vectors $\{e_i;i\in I\}$ becomes a Lorentzian space of dimension $14$.

The acute angled hyperbolic polytope
\[ P=\mathbb{P}(\{v\in V;\langle v,v\rangle<0,\langle v,e_i\rangle\geq0\;\forall i\in I\}) \]
in $\mathbb{B}(V)$ will be called the $26$-cell. It has dimension $13$ and finite hyperbolic volume.
Indeed it is easy to check that the critical subdiagrams are the connected parabolic diagrams of type 
$A_5$ or $D_4$. Since $N(A_5)=3A_5$ and $N(D_4)=4D_4$ are both parabolic and have both Gram matrices 
of rank $12$ this follows from the theorem of Vinberg.

The $26$-cell $P$ has two natural vertices $w_{\mathcal{P}}$ perpendicular to all $e_p$ with $p\in\mathcal{P}$
and $w_{\mathcal{L}}$ perpendicular to all $e_l$ with $l\in\mathcal{L}$. The midpoint $w_0$ on the geodesic from
$w_{\mathcal{P}}$ to $w_{\mathcal{L}}$ is called the Weyl point. The group $L_3(3).2$ of diagram automorphisms
of the unmarked Coxeter diagram $I_{26}$ acts a group of isometries of $P$ leaving the Weyl point $w_0$ fixed.
The $26$-cell $P$ has two inequivalent ideal vertices of the above types $3A_5$ and $4D_4$. 
The next conjecture is the analogue of Scholium{\;\ref{scholium 12-cell}} for the $26$-cell $P$.

\begin{conjecture}\label{26-cell conjecture}
The interior of the $26$-cell $P$ is the connected component of $\mathbb{B}(V)\cap\mathbb{B}^{\circ}(L)$ containing $w_0$?
In other words, the interior of $P$ does not meet any mirror of the complex Allcock ball $\mathbb{B}(L)$?
\end{conjecture}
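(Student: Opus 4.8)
The two formulations are equivalent, so the plan is to prove the second one: no complex mirror meets $\mathrm{int}\,P$. Granting this, $\mathrm{int}\,P$ is an open convex (hence connected) subset of $\mathbb{B}(V)\cap\mathbb{B}^{\circ}(L)$ containing $w_0$, and each of its $26$ facets lies on the hyperball $\varepsilon_i^{\perp}\cap\mathbb{B}(V)$ of a generating root $\varepsilon_i$; since these hyperballs are deleted from $\mathbb{B}^{\circ}(L)$, the connected component through $w_0$ cannot cross any facet, nor exit through a lower face (whose surrounding walls are again deleted mirrors), and therefore coincides with $\mathrm{int}\,P$.

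The first step is to make the real structure explicit. The assignment $e_p=i\varepsilon_p$, $e_l=\varepsilon_l$ makes the Gram matrix real, so complex conjugation with respect to $V$ is an antiunitary involution $\chi$ of $\C\otimes L$ with $\mathrm{Fix}(\chi)=V$. Because the Eisenstein integers are stable under conjugation and $\chi(\varepsilon_i)=\pm\varepsilon_i$ on generators, $\chi$ preserves the lattice $L$; it induces an antiholomorphic isometric involution of $\mathbb{B}(L)$ with fixed locus $\mathbb{B}(V)$ that permutes the mirrors. Writing a root as $\varepsilon=u+iw$ with $u,w\in V$, one has $\langle v,\varepsilon\rangle=\langle v,u\rangle-i\langle v,w\rangle$ for $v\in V$, so the mirror $\varepsilon^{\perp}$ meets $\mathbb{B}(V)$ in the real hyperplane $\{v:\langle v,u\rangle=0\}$ when $\chi\varepsilon\in\C^{\times}\varepsilon$ (a \emph{$\chi$-real} root, the case of the $26$ generators), and otherwise in the real codimension-two locus $u^{\perp}\cap w^{\perp}$. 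Thus mirror-freeness of $\mathrm{int}\,P$ splits into a codimension-one problem for the $\chi$-real roots and a codimension-two problem for the rest.

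For the codimension-one part I would show that $P$ is a chamber of the locally finite arrangement of real hyperplanes cut out by the $\chi$-real mirrors. The delicate point is that the naive real reflections $r_{e}(v)=v-\tfrac{2}{3}\langle v,e\rangle e$ in these norm-three vectors do \emph{not} generate a discrete group, since adjacent walls meet at the non-crystallographic dihedral angle $\arccos(1/\sqrt3)$; the discreteness that saves the argument comes only from the ambient triflection group $\Gamma=\PU(L)$. I would therefore combine the finite-volume description of $P$ already furnished by Vinberg's criterion with a Basak-style enumeration of the roots $\varepsilon$ whose mirrors lie close to $w_0$, bounding the distance from $w_0$ to the relevant walls and checking by computer that no $\chi$-real mirror beyond the $26$ generators penetrates $\mathrm{int}\,P$, together with a separate local analysis at the two cusps of types $3A_5$ and $4D_4$, where Vinberg's theorem already fixes the neighbouring parabolic configurations.

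For the codimension-two part I would pursue the Deligne--Mostow analogy that motivates the conjecture. Each free $12$-gon ($\tilde A_{11}$ subdiagram) of $I_{26}$ spans, by the same construction, a totally geodesic real sub-ball $\mathbb{B}(V')\subset\mathbb{B}(V)$ isometric to the $9$-dimensional Deligne--Mostow ball carrying the $12$-cell $P'$; Scholium \ref{scholium 12-cell} gives that $\mathrm{int}\,P'$ meets no mirror, and Theorem \ref{Basak 26 theorem} relating the kernels lets one match the mirrors of $L$ meeting $\mathbb{B}(V')$ with those of the Deligne--Mostow lattice $L^{10}$, while the transitive action of $\LL_3(3).2$ on the free $12$-gons controls many such slices at once. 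The main obstacle, which I expect to be the crux of the entire problem, is that these $9$-dimensional slices do not cover the $13$-dimensional interior of $P$: upgrading ``every free-$12$-gon slice is mirror-free'' to ``all of $\mathrm{int}\,P$ is mirror-free'' would require either an explicit period/moduli parametrization of the Allcock ball interior playing the role that convex $12$-gons play for the Deligne--Mostow ball, or a convexity argument forcing any mirror through $\mathrm{int}\,P$ to already meet one of the slices. Supplying such an argument is precisely the gap that keeps the statement conjectural.
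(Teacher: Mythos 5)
You should first be aware that the statement you were asked to prove is Conjecture \ref{26-cell conjecture} of the paper: it is open, the author offers no proof, and the paper's main point is that this conjecture would \emph{imply} the deflation of the free $12$-gons in $G/N$ and hence Allcock's monstrous proposal. So your attempt has to be judged as an attack on an open problem, and it does not close it. That said, your framework is sensible: the equivalence of the two formulations, and the splitting of the mirrors of $L$ into those meeting $\mathbb{B}(V)$ in real codimension one (the $\chi$-real roots, including the $26$ generators) versus those cutting it in real codimension two, are both correct reductions. Your codimension-two strategy via free $12$-gon slices runs exactly parallel to the paper's treatment of the faces $P^J$, which are copies of the $12$-cell and mirror-free by Scholium \ref{scholium 12-cell} (a fact that rests on the Deligne--Mostow theorem), with $\LL_3(3).2$-transitivity on the $A_4$ subdiagrams playing the same role there.

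But both halves of your plan terminate in declarations of intent rather than arguments, and these are genuine gaps. In the codimension-one part, ``checking by computer that no $\chi$-real mirror beyond the $26$ generators penetrates $\mathrm{int}\,P$'' \emph{is} the entire content of that step and is not carried out; note that the partial result of Basak quoted in the paper says only that the $26$ facet mirrors are the mirrors \emph{nearest} to the Weyl point $w_0$, which does not exclude a farther mirror slicing through the interior away from $w_0$, so this enumeration is missing mathematics, not bookkeeping. In the codimension-two part you state yourself that the $9$-dimensional slices do not cover the $13$-dimensional interior and that bridging this ``is precisely the gap that keeps the statement conjectural'' --- correct, and no convexity or covering argument is even sketched; for the Deligne--Mostow lattice the analogous statement is a theorem only because the period map gives a modular interpretation of the ball quotient, and as the paper's final section stresses, no such interpretation is known for the Allcock ball. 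In short: your proposal is a reasonable research plan, consistent with the paper's partial evidence, but it is not a proof, and the two holes you would need to fill are exactly why the statement remains a conjecture.
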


Partial results towards this conjecture are due to Basak \cite{Basak 2012}. He shows that the $26$ mirrors supported by the
codimension one faces of the $26$-cell $P$ are exactly those mirrors in the Allcock ball $\mathbb{B}(L)$ that are 
nearest to the Weyl point $w_0$. The real subbal $\mathbb{B}(V)\subset\mathbb{B}(L)$ supported by $P$ contains 
all $26$ shortest geodesics from $w_0$ to these nearest mirrors, and this characterizes $\mathbb{B}(V)$. 
In particular for each vertex $i$ of $I_{26}$ the geodesic from $w_0$ to the orthogonal projection $w_i$ of $w_0$ 
on the codimension one face $P^i$ of $P$ does not meet any mirror in $\mathbb{B}(L)$ before it reaches $w_i$.

Basak defines a curve $\gamma_i$ in $\mathbb{B}^{\circ}$ with begin point the Weyl point $w_0$ and
end point $t_i(w_0)$. Here $t_i$ is the triflection with eigenvalue $\omega$ leaving the codimension 
one face $P^i$ fixed. The curve $\gamma_i$ is almost the geodesic from $w_0$ to $w_i$ and then continues 
geodesically to $t_i(w_0)$. However this curve hits the mirror supported by $P^i$ at $w_i$ and so instead 
shortly before arriving at $w_i$ it makes a one third turn in the complex line through $w_0,w_i,t_i(w_0)$.
The curve $\gamma_i$ defines the meridian element $T_i$ of $\Pi_1^{\mathrm{orb}}(\mathbb{B}^{\circ}/\Gamma,w_0)$.

For $i,j$ two different nodes of $I_{26}$ Basak proves the Artin braid relations
\[ T_iT_jT_i=T_jT_iT_j\;,\;T_iT_j=T_jT_i \]
in case $i,j$ are connected or disconnected respectively along the following lines.
Let $w_{ij}$ be the orthogonal projection of $w_0$ on the codimension two face $P^{ij}$ of $P$.
Basak shows that the interior of the convex hull of the $4$ points $w_0,w_i,w_j,w_{ij}$ does not
meet any mirror of $\mathbb{B}(L)$. The curve $\gamma_i$ can be continuously deformed in $\mathbb{B}^{\circ}(L)$
to a curve $\gamma_{ij}$ going geodesically from $w_0$ to $w_{ij}$ and shortly before arriving at $w_{ij}$
making a one third turn around the mirror supported by $P^{i}$. Likewise $\gamma_j$ can be deformed to $\gamma_{ji}$.
The braid relation for the two corresponding meridians is a local relation of the mirror arrangement near $w_{ij}$
and follows from the work of Couwenberg as described in Section 3, or by giving the explicit homotopy as Basak did.
If $i,j$ are connected then four mirrors pass through $P^{ij}$ while in case $i,j$ are disconnected only two 
orthogonal mirrors pass trough $P^{ij}$.

The group $\LL_3(3).2$ of diagram automorphisms of the unmarked diagram $I_{26}$ acts by isometries on the $26$-cell $P$.
The Weyl point $w_0$ is a fixed point for this action. The infinitesimal action of $\LL_3(3).2$ on the tangent space of
$\mathbb{B}(V)$ at $w_0$ decomposes as a direct sum of a one dimensional representation (the line through $w_{\mathcal{P}}$
and $w_{\mathcal{L}}$) and an irreducible representation of dimension $12$ on the orthogonal complement.
This is the smallest dimensional irreducible representation of $\LL_3(3).2$ that is nontrivial on $\LL_3(3)$.

Let $J$ be a subdiagram of $I_{26}$ of type $A_4$. Any two such subdiagrams are conjugated under $\LL_3(3).2$
and so we can assume that $J$ consists of the nodes $c_3d_3e_3f_3$ in the notation of Section $7$.
The complementary subdiagram $Z(J)$ obtained by deleting all nodes of $J$ and those connected to $J$
contains the $12$ nodes $ab_1c_1d_1e_1f_1a_3f_2e_2d_2c_2b_2$ and is of type $\tilde{A}_{11}$.
The face $P^J$ of $P$ of codimension $4$ is just the $12$-cell of dimension $9$ as discussed in Section $6$.
We denote by $\mathbb{B}(U)$ the real hyperbolic space supported by $P^J$, viewed as subspace of
the real hyperbolic space $\mathbb{B}(V)$ supported by $P$. The subgroup of $\LL_3(3).2$ preserving the face $P^J$
is the cyclic group $C_{12}$ of order $12$, with generator $R$ permuting the nodes $ab_1c_1d_1e_1f_1a_3f_2e_2d_2c_2b_2$
in cyclic way, while also permuting $c_3f_3$, $d_3e_3$, $fg_1z_3g_2$ and $b_3a_1z_2g_3z_1a_2$ in cyclic way.

\begin{lemma}
Any positive definite Eisenstein lattice of rank $5$ containing $L^4$ as a primitive sublattice
and spanned by $L^4$ and a complementary root is of the form $L^4\oplus L^1$.
\end{lemma}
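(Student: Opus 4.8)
The plan is to reduce everything to showing that the complementary root is orthogonal to $L^4$. Write $M$ for the rank $5$ lattice, $\varepsilon$ for the root with $\langle\varepsilon,\varepsilon\rangle=3$, and fix a basis $\varepsilon_1,\dots,\varepsilon_4$ of $L^4$, so that $M=L^4+\mathcal E\varepsilon$. Since $M$ has rank $5$, $\varepsilon$ is not in $\Q(\omega)\otimes L^4$, so $M/L^4\cong\mathcal E$ and $L^4$ is automatically primitive. If I can prove $\langle\varepsilon,\varepsilon_i\rangle=0$ for all $i$, then $\mathcal E\varepsilon\cong L^1$ is an orthogonal complement and $M=L^4\oplus\mathcal E\varepsilon=L^4\oplus L^1$ at once. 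Thus it suffices to show that the orthogonal projection $\varepsilon_0$ of $\varepsilon$ onto $\Q(\omega)\otimes L^4$ vanishes.

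The key step is an integrality observation: in any positive definite Eisenstein lattice every nonzero vector $x$ satisfies $\langle x,x\rangle\in3\Z_{>0}$, hence $\langle x,x\rangle\ge3$. Indeed $\langle x,x\rangle$ is a real element of $\theta\mathcal E$, and a direct check shows $\theta\mathcal E\cap\R=3\Z$: an element $\theta z$ with $z=a+b\omega$ is real exactly when $z\in\theta\Z$, in which case $\theta z\in3\Z$. So the positive norms in such a lattice are bounded below by $3$.

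Now set $c_i=\langle\varepsilon,\varepsilon_i\rangle\in\theta\mathcal E$, write $c_i=\theta d_i$ with $d_i\in\mathcal E$, and let $G$ be the Gram matrix of the $\varepsilon_i$. Then
\[ \langle\varepsilon_0,\varepsilon_0\rangle=c^\ast G^{-1}c=3\,d^\ast G^{-1}d=d^\ast(3G^{-1})d . \]
A short computation gives $\det G=9$, and expanding $3G^{-1}=\tfrac13\mathrm{adj}(G)$ shows that its diagonal entries lie in $3\Z$ and its off-diagonal entries in $\theta\mathcal E$; in other words $3G^{-1}$ is again the Gram matrix of a positive definite Eisenstein lattice, namely the rescaled dual $\theta(L^4)^\vee$. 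Applying the integrality observation to this lattice gives $\langle\varepsilon_0,\varepsilon_0\rangle\in3\Z_{\ge0}$. On the other hand, since $\varepsilon\notin\Q(\omega)\otimes L^4$ the orthogonal component $\varepsilon-\varepsilon_0$ is nonzero, so positive definiteness of $M$ gives $\langle\varepsilon-\varepsilon_0,\varepsilon-\varepsilon_0\rangle>0$; combined with $\langle\varepsilon,\varepsilon\rangle=\langle\varepsilon_0,\varepsilon_0\rangle+\langle\varepsilon-\varepsilon_0,\varepsilon-\varepsilon_0\rangle=3$ this forces $\langle\varepsilon_0,\varepsilon_0\rangle<3$. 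The two bounds together give $\langle\varepsilon_0,\varepsilon_0\rangle=0$, hence $\varepsilon_0=0$ and $\varepsilon\perp L^4$, which completes the argument.

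The main obstacle is the middle step, namely verifying that the rescaled dual $3G^{-1}$ is once more an integral Eisenstein lattice, so that the integrality observation may legitimately be applied to it. This is the one place where a genuine, though finite, computation with the $A_4$ Gram matrix is required; conceptually it records the $\theta$-modularity of $L^4$ (the coincidence $\det G=9=3^2$ is the shadow of this). Everything surrounding it is formal, the crux being simply that both $L^4$ and its suitably rescaled dual have all norms divisible by $3$.
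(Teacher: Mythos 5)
Your proposal is correct, but it takes a genuinely different route from the paper. The paper works directly with the rank-$5$ lattice: it writes $\langle\varepsilon_i,\varepsilon_5\rangle=x\theta,y\theta,z\theta,w\theta$ and expands the determinant of the full $5\times5$ Gram matrix symbolically, then uses the identity $2a\overline{a}+2b\overline{b}+\theta a\overline{b}-\theta b\overline{a}=(a-b\omega)(\overline{a-b\omega})+(a+b\omega)(\overline{a+b\omega})$ to rewrite (determinant)$/9$ as $3-x\overline{x}-w\overline{w}-(a-b\omega)(\overline{a-b\omega})-(a+b\omega)(\overline{a+b\omega})$ with $a=y\theta-x$, $b=z\theta+w$; positive definiteness makes this positive, so the four nonnegative integer terms sum to at most $2$, and a short case analysis (at least two of them vanish) forces $x=y=z=w=0$. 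You instead run the standard glue-vector argument: the orthogonal projection $\varepsilon_0$ of the complementary root lies in the rescaled dual $\theta(L^4)^\vee$, whose Gram matrix $3G^{-1}=\mathrm{adj}(G)/3$ you check to be again an integral Eisenstein Gram matrix, so that $\langle\varepsilon_0,\varepsilon_0\rangle\in3\Z_{\geq0}$, while positivity of the orthogonal component gives $\langle\varepsilon_0,\varepsilon_0\rangle<3$, whence $\varepsilon_0=0$. I verified your supporting computations: $\theta\mathcal{E}\cap\R=3\Z$ is right; $\det G=9$ follows from the recursion $D_n=3D_{n-1}-3D_{n-2}$, giving $3,6,9,9,0$ for $n=1,\dots,5$ (consistent with the paper's statement that $L^5$ is degenerate); and $\mathrm{adj}(G)/3$ has diagonal $(3,6,6,3)$ with off-diagonal entries such as $-2\theta$, $3$, $-\theta$, $3\theta$, all in $\theta\mathcal{E}$ (note $3=\theta\cdot(-\theta)\in\theta\mathcal{E}$), so it is indeed a positive definite Eisenstein Gram matrix and your integrality observation applies. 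As for what each approach buys: the paper's is elementary and self-contained, at the price of a symbolic $5\times5$ determinant and a case distinction; yours isolates the structural reason the lemma holds --- the minimum nonzero norm of $\theta(L^4)^\vee$ is already $3$, so no norm-$3$ vector can be glued to $L^4$ nontrivially --- eliminates the case analysis, reduces the computation to the adjugate of a fixed numerical $4\times4$ matrix, and makes clear that the same conclusion holds for any Eisenstein lattice whose rescaled dual has minimum norm at least $3$.
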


\begin{proof}
By assumption the lattice has a root basis $\varepsilon_1,\cdots,\varepsilon_4,\varepsilon_5$
with the first four vectors the standard basis of $L^4$. If we assume that
\[ \langle\varepsilon_1,\varepsilon_5\rangle=x\theta\;,\;\langle\varepsilon_2,\varepsilon_5\rangle=y\theta\;,\;
\langle\varepsilon_3,\varepsilon_5\rangle=z\theta\;,\;\langle\varepsilon_4,\varepsilon_5\rangle=w\theta \]
then the determinant of the Gram matrix (divided by $9$)
is easily found to be
\begin{eqnarray*}
3-x\overline{x}-w\overline{w}-2(y\theta-x)(\overline{y\theta-x})-2(z\theta+w)(\overline{z\theta+w})+ \\
-\theta(y\theta-x)(\overline{z\theta+w})+\theta(z\theta+w)(\overline{y\theta-x})
\end{eqnarray*}
with $x,y,z,w\in\mathcal{E}$. Since
\[ 2a\overline{a}+2b\overline{b}+\theta a\overline{b}-\theta b\overline{a}=
(a-b\omega)(\overline{a-b\omega})+(a+b\omega)(\overline{a+b\omega}) \]
the above expression becomes
\[ 3-x\overline{x}-w\overline{w}-(a-b\omega)(\overline{a-b\omega})-(a+b\omega)(\overline{a+b\omega}) \]
with $a=y\theta-x,b=z\theta+w$. This expression should be positive, and so
\[ x\overline{x}\leq1\;,\;w\overline{w}\leq1\;,\;
(a-b\omega)(\overline{a-b\omega})\leq1\;,\;(a+b\omega)(\overline{a+b\omega})\leq1 \]
and their sum is at most $2$, so at least two terms are $0$.

If $x=w=0$ then $a=y\theta,b=z\theta$ which implies $y=z=0$.
Similarly if $x=0,a=b\omega$ then $a=b=0$ which in turn implies $y=z=w=0$.
Finally if $a=b\omega=-b\omega$ then $a=b=0$ and so $x=y=z=w=0$.
\end{proof}

Hence the complexification $\mathbb{B}(L_{\mathrm{DM}})$ of $\mathbb{B}(U)$ in the Allcock ball $\mathbb{B}(L_{\mathrm{A}})$
is the intersection of $40$ mirrors in $\mathbb{B}(L_{\mathrm{A}})$, and $\mathbb{B}(U)=\mathbb{B}(V)\cap\mathbb{B}(L_{\mathrm{DM}})$.
All other mirrors in $\mathbb{B}(L_{\mathrm{A}})$ intersecting $\mathbb{B}(L_{\mathrm{DM}})$ do so in a perpendicular way. 
The local structure of the $26$-cell $P$ near its face $P^J$ is a product of $P^J$ with a real simplicial chamber 
$P_J$ of dimension $4$ of the group $\U(L^4)$ corresponding to $5$ ordered points on $\mathbb{R}$ with zero sum,
as discussed in Section $3$.

Let $J$ be the given subset of $I_{26}$ of type $A_4$ with complement $Z(J)$ of type $\tilde{A}_{11}$.
Let $w_J$ be the orthogonal projection on the face $P^J$ of the Weyl point $w_0$ of $P$.
The point $w_J$ is the central point of $P^J$ corresponding to the regular $12$-gon in the Deligne--Mostow picture.
For $j\in Z(J)$ let $w_{jJ}$ be the projection on $w_0$ on the face $P^{jJ}$ (with $jJ$ standing for $\{j\}\sqcup J$), 
which is the same as the orthogonal projection of $w_J$ on the the codimension one face $P^{jJ}$ of $P^J$.
Now the above conjecture implies that the curve $\gamma_j$ can be continuously deformed to a curve $\gamma_{jJ}$,
which is a curve $\tilde{\gamma}_j$ in the tubular neighborhood of $\mathbb{B}_{\mathrm{DM}}$ in $\mathbb{B}_{\mathrm{A}}$ 
with base point a nearby point $\tilde{w}_J$ of $w_J$ conjugated by a geodesic from this nearby point to $w_0$. 

\begin{center}
\psset{unit=1mm}
\begin{pspicture}*(-25,-25)(25,25)

\pscircle(0,0){22.5}
\psdot(0,0)
\rput(2,2){$w_0$}
\rput(12,12){$\mathbb{B}(V)$}
\rput(11,-13){$\mathbb{B}(U)$}

\psarc(-80,-20){77.62}{-1.9}{29.8}
\psdot(-2.7,-13.3)
\psdot(-4.6,-1.5)
\rput(-8.1,-1.5){$w_j$}
\rput(-7,-11.5){$w_{jJ}$}
\psline(0,0)(-4.6,-1.5)
\psline[arrows=->,arrowscale=1.2](0,0)(-3.33,-1)

\psarc(0,-25.98){12.99}{30}{150}
\psdot(0,-12.99)
\rput(2,-16){$w_J$}
\psarc[arrows=->,arrowscale=1](0,-25.98){12.99}{90}{100}

\psline[arrows=->,arrowscale=1.2](0,0)(0,-8)
\psline(0,0)(0,-12.99)

\end{pspicture}
\end{center}
Indeed the desired homotopy is obtained using the orthogonal projection of $P$ onto its face $P^J$.
Under the identification of $Z(J)$ with $\mathbb{Z}/12\mathbb{Z}$ the meridian elements 
$T_i\in\Pi_1^{\mathrm{orb}}(\mathbb{B}^{\circ}/\Gamma,w_0)$ for $i\in\mathbb{Z}/12\mathbb{Z}$
satisfy the Artin braid relations
\[ T_iT_{i+1}T_i=T_{i+1}T_iT_{i+1}\;,\;T_iT_j=T_jT_i \]
for $i-j\neq\pm1$ of the affine Artin group of type $\tilde{A}_{11}$.

The inclusion map of the face $P^J$ of $P$ gives rise to a holomorphic map from the Deligne--Mostow ball quotient
$\mathbb{B}/\Gamma(L_{\mathrm{DM}})$ to the Allcock ball quotient $\mathbb{B}/\Gamma(L_{\mathrm{A}})$. 
This map is an immersion, but not an injection, since the image of $\mathbb{B}/\Gamma(L_{\mathrm{DM}})$ 
in $\mathbb{B}/\Gamma(L_{\mathrm{A}})$ has triple self intersection along a one dimensional ball quotient, 
which is isomorphic to the modular curve $\mathbb{H}_+/\mathrm{PSL}(2,\mathbb{Z})$.
Let $\mathbb{N}^{\circ}$ be the normalisation of a small tubular neighbourhood of 
$\mathbb{B}/\Gamma(L_{\mathrm{DM}})$ inside the mirror complement $\mathbb{B}^{\circ}/\Gamma(L_{\mathrm{A}})$.
Then we have a fiber bundle
\[ \mathbb{N}^{\circ}\rightarrow \mathbb{B}^{\circ}/\Gamma(L_{\mathrm{DM}}) \]
with fiber a small ball around the origin in $(\mathbb{C}\otimes L^4)^{\circ}$ modulo $\mathrm{U}(L^4)$.
This gives rise to an exact homotopy sequence
\[ 1\rightarrow\Pi_1^{\mathrm{orb}}((\mathbb{C}\otimes L^4)^{\circ}/\mathrm{U}(L^4))\rightarrow
\Pi_1^{\mathrm{orb}}(\mathbb{N}^{\circ})\rightarrow\Pi_1^{\mathrm{orb}}(\mathbb{B}^{\circ}/\Gamma(L_{\mathrm{DM}}))\rightarrow1 \]
and taking the quotient by squares of meridians we conclude that the group $\Pi_1^{\mathrm{orb}}(\mathbb{N}^{\circ})$
modulo squares of meridians is isomorphic to $S_5\times S_{12}$. Indeed the only action of $S_{12}$ by automorphisms on $S_5$ 
is the trivial action. Hence the image of the subgroup generated by the $T_i$ for $i\in\mathbb{Z}/12\mathbb{Z}$ under
the homomorphism $\varphi:W(I_{26})\rightarrow G/N$ is a factor group of $S_{12}$. In other words, the free $12$-gons
are deflated in $G/N$. Therefore the monstrous proposal of Allcock is a consequence of Conjecture{\;\ref{26-cell conjecture}}.
The following remark I learned from Eduard Looijenga.

\begin{remark}
One can show that the orbifold fundamental group of the image of $\mathbb{N}^\circ$ in $\mathbb{B}^{\circ}/\Gamma(L_A)$ 
is obtained from that of $\mathbb{N}^\circ$ by means of an $\mathrm{HNN}$ extension (after Higman, Neumann and Neumann) 
\cite{Serre 1980}. To be precise, the fiber orbifold fundamental group 
$\Pi_1^{\mathrm{orb}}(\mathrm{Fiber})\subset \Pi_1^{\mathrm{orb}}(\mathbb{N}^\circ)$ also appears as the image of an embedding 
$h: \Pi_1^{\mathrm{orb}}(\mathrm{Fiber})\to \Pi_1^{\mathrm{orb}}(\mathrm{Base})$ and the $\mathrm{HNN}$ extension in question simply adds an 
extra generator $t$ to $\Pi_1^{\mathrm{orb}}(\mathbb{N}^{\circ})$ subject to the relation that conjucagy with $t$ restricted to 
$\Pi_1^{\mathrm{orb}}(\mathrm{Fiber})$ is a lift of $h$. So if we subsequently divide out by the (normal) subgroup generated 
by the squares of the meridians, then we get an $\mathrm{HNN}$ extension of $S_5\times S_{12}$ relative to the standard inclusion of $S_5$ 
in the second factor. Note that Conway and Pritchard \cite{Conway--Pritchard 1992} characterize the bimonster as the smallest 
quotient of this $\mathrm{HNN}$ extension, which still contains  $S_5\times S_{12}$ and is not isomorphic to $S_{17}$. 
\end{remark}

\section{Final remarks}

The results of this paper beg for the existence of a suitable moduli space $\mathcal{M}$ and a period map
\[ \mathrm{Per}:\mathcal{M}\rightarrow\mathbb{B}^{\circ}/\Gamma(L) \]
as in previous sections. In fact the conjecture of Allcock implies the existence of an quasi projective orbifold $\mathcal{N}$
together with an action of the bimonster $M\wr2$ such that $\mathcal{N}/(M\wr2)=\mathcal{M}$.
All I can say for the moment is that the marked space $\mathcal{N}$ must be of a spectacular geometric complexity 
because of the structure and size of the bimonster.

There is a variation on the theme of this paper with Eisenstein integers $\mathcal{E}$ replaced by Gauss integers
$\mathcal{G}=\Z+\Z i$ and Gauss lattice automorphism groups generated by tetraflections (order four complex reflections). 
There is a ball quotient, also found by Allcock \cite{Allcock 2000}, associated with Gauss lattices of type $Y_{333}$ as 
the maximal subtree of the incidence graph $I_{14}$ of the projective plane $\mathbb{P}^2(2)$ over the field of two elements. 
The Coxeter diagram $I_{14}$ has the form (with index $i$ taking values $1,2,3$)
\begin{center}
\psset{unit=0.7mm}
\begin{pspicture}*(-20,-27
)(20,27)

\psline(-10,20)(-10,0)
\psline(-10,0)(-10,-20)
\psline(-10,0)(10,0)
\psline(10,20)(10,0)
\psline(10,0)(10,-20)
\psline[linewidth=1.1](-10,-20)(10,-20)

\psdot[dotsize=3](-10,20)
\psdot[dotsize=3](-10,-20)
\psdot[dotsize=3,dotstyle=Bo](-10,0)
\psdot[dotsize=3](10,0)
\psdot[dotsize=3,dotstyle=Bo](10,-20)
\psdot[dotsize=3,dotstyle=Bo](10,20)

\rput(-15,22){$a$}
\rput(15,22){$z$}
\rput(-15,-22){$a_i$}
\rput(15,-22){$d_i$}
\rput(-15,0){$b_i$}
\rput(15,0){$c_i$}


\end{pspicture}
\end{center}
with similar meaning as for the $I_{26}$ diagram. A thin bond indicates that the two nodes are connected
if indices coincide, and a thick bond indicates that the two nodes are connected if indices differ.
The automorphism group of the unmarked (forget coloring of the nodes) diagram is the group 
$L_3(2).2$ with $L_3(2)$ the simple group of order $168=2^3\cdot3\cdot7$.

The Lorentzian Gauss lattice $L(Y_{322},\mathcal{G})$ has rank $8$ and appears as quotient modulo kernel 
of the degenerate Gauss lattices $L(Y_{333},\mathcal{G})$ and $L(I_{14},\mathcal{G})$. 
The Hermitian form on $L(I_{14},\mathcal{G})$ is defined by
\[ \langle\varepsilon_j,\varepsilon_j\rangle=2\;,\;\langle\varepsilon_j,\varepsilon_k\rangle=0\;,\;\langle\varepsilon_p,\varepsilon_l\rangle=1+i \]
for all $j$, for all disconnected $j\neq k$ and for all connected black $p$ and white $l$ (points $p$ on a line $l$).
If $G(Y_{322},\mathcal{G})$ is the orbifold fundamental group of $\mathbb{B}^{\circ}/\Gamma(Y_{322},\mathcal{G})$
then we get natural homomorphisms
\[ A(I_{14})\rightarrow G(Y_{322},\mathcal{G})\rightarrow\U(L(Y_{322},\mathcal{G})/(1+i)L(Y_{322},\mathcal{G}))=\mathrm{O}^-_8(2).2 \]
by taking the quotient of the squares of the meridians, in agreement with the presentation of $\mathrm{O}^-_8(2).2$ 
due to Simons \cite{Simons 2001}. This analogy provides a further piece of evidence for the conjecture of Allcock.
Even in this simpler situation a modular interpretation for the regular part $\mathbb{B}^{\circ}/\Gamma(L(Y_{322},\mathcal{G}))$ 
of this ball quotient is lacking. The hyperball quotient $\mathbb{B}^{\circ}/\Gamma(L(E_7),\mathcal{G}))$ is 
isomorphic to the moduli space of smooth quartic plane curves (a result due to Kondo \cite{Kondo 2000}) 
and this is the regular part of the mirror discriminant $\Delta$ in $\mathbb{B}/\Gamma(L(Y_{322},\mathcal{G}))$.

\noindent
Gert Heckman, Radboud University Nijmegen, P.O. Box 9010,
\newline
6500 GL Nijmegen, The Netherlands (E-mail: g.heckman@math.ru.nl)

\end{document}